\newcommand*\patchAmsMathEnvironmentForLineno[1]{%
  \expandafter\let\csname old#1\expandafter\endcsname\csname #1\endcsname
  \expandafter\let\csname oldend#1\expandafter\endcsname\csname end#1\endcsname
  \renewenvironment{#1}%
     {\linenomath\csname old#1\endcsname}%
     {\csname oldend#1\endcsname\endlinenomath}}%
\newcommand*\patchBothAmsMathEnvironmentsForLineno[1]{%
  \patchAmsMathEnvironmentForLineno{#1}%
  \patchAmsMathEnvironmentForLineno{#1*}}%
\renewcommand{\subsectionmark}[1]{}
\newenvironment{enumeratearabic}{
\begin{enumerate}[label=(\arabic*), leftmargin=0pt,labelindent=2em,itemindent=!]
}{
\end{enumerate}
}
\newenvironment{enumeratearabic*}{
\begin{enumerate*}[label=(\arabic*)] %
}{
\end{enumerate*}
}
\newenvironment{enumerateroman*}{
\begin{enumerate*}[label=(\roman*)] %
}{
\end{enumerate*}
}
\numberwithin{equation}{section}
\newtheorem{theoremcounter}{theoremcounter}[section]
\theoremstyle{plain}
\newtheorem{lemma}[theoremcounter]{Lemma}
\newtheorem{theorem}[theoremcounter]{Theorem}
\newtheorem{maintheoremcounter}{maintheoremcounter}
\newtheorem{maintheorem}[maintheoremcounter]{Theorem}
\theoremstyle{definition}
\theoremstyle{remark}
\newtheorem{remark}[theoremcounter]{Remark}
\newtheorem*{mainremark}{Remark}
\newtheorem*{remarkcomputation}{Computation}
\newenvironment{mainremarkenumerate}
{
\begin{mainremark}
\begin{enumeratearabic}
}{
\end{enumeratearabic}
\end{mainremark}
}
\let\cal\undefined
\newcommand{\tx}{\ensuremath{\text}}
\newcommand{\tbf}{\bfseries}
\newcommand{\cal}{\ensuremath{\mathcal}}
\renewcommand{\frak}{\ensuremath{\mathfrak}}
\newcommand{\cE}{\ensuremath{\cal{E}}}
\newcommand{\cF}{\ensuremath{\cal{F}}}
\newcommand{\frake}{\ensuremath{\frak{e}}}
\newcommand{\rmc}{\ensuremath{\mathrm{c}}}
\newcommand{\rmE}{\ensuremath{\mathrm{E}}}
\newcommand{\rmL}{\ensuremath{\mathrm{L}}}
\newcommand{\rmM}{\ensuremath{\mathrm{M}}}
\newcommand{\rmS}{\ensuremath{\mathrm{S}}}
\newcommand{\rmT}{\ensuremath{\mathrm{T}}}
\newcommand{\td}{\tilde}
\newcommand{\ov}{\overline}
\newcommand*{\longhookrightarrow}{\ensuremath{\lhook\joinrel\relbar\joinrel\rightarrow}}
\newcommand*{\longtwoheadrightarrow}{\ensuremath{\relbar\joinrel\twoheadrightarrow}}
\newcommand{\ra}{\ensuremath{\rightarrow}}
\newcommand{\hra}{\ensuremath{\hookrightarrow}}
\newcommand{\thra}{\ensuremath{\twoheadrightarrow}}
\newcommand{\lra}{\ensuremath{\longrightarrow}}
\newcommand{\lhra}{\ensuremath{\longhookrightarrow}}
\newcommand{\lthra}{\ensuremath{\longtwoheadrightarrow}}
\newcommand{\mto}{\ensuremath{\mapsto}}
\newcommand{\lmto}{\ensuremath{\longmapsto}}
\newcommand{\amid}{\ensuremath{\mathop{\mid}}}
\newcommand{\ZZ}{\ensuremath{\mathbb{Z}}}
\newcommand{\QQ}{\ensuremath{\mathbb{Q}}}
\newcommand{\RR}{\ensuremath{\mathbb{R}}}
\newcommand{\CC}{\ensuremath{\mathbb{C}}}
\renewcommand{\Re}{\ensuremath{\mathrm{Re}}}
\newcommand{\isdiv}{\amid}
\renewcommand{\pmod}[1]{\ensuremath{\;(\mathrm{mod}\, #1)}}
\newcommand{\Hom}{\ensuremath{\mathop{\mathrm{Hom}}}}
\newenvironment{psmatrix}{\left(\begin{smallmatrix}}{\end{smallmatrix}\right)}
\newcommand{\SL}[1]{\ensuremath{\mathrm{SL}_{#1}}}
\newcommand{\bbone}{\ensuremath{\mathds{1}}}
\renewcommand{\ker}{\ensuremath{\mathop{\mathrm{ker}}}}
\newcommand{\lspan}{\ensuremath{\mathop{\mathrm{span}}}}
\newcommand{\HS}{\mathbb{H}}
\newcommand{\lcm}{\ensuremath{\mathrm{lcm}}}
\newcommand{\parity}{\ensuremath{\mathrm{par}}}
\newcommand{\Res}{\ensuremath{\mathrm{Res}}}
\newcommand{\Ind}{\ensuremath{\mathrm{Ind}}}
\newcommand{\new}{\ensuremath{\mathrm{new}}}
\newcommand{\old}{\ensuremath{\mathrm{old}}}
\newcommand{\Ga}{\ensuremath{\Gamma}}
\newcommand{\ga}{\ensuremath{\gamma}}
\newcommand{\headertitle}{{\normalfont%
  All modular forms of weight $2$ can be expressed by Eisenstein series%
}}
\newcommand{\headerauthors}{%
  M.~Raum, J.~Xia%
}
\title{%
  All modular forms of weight~$2$ can be expressed by Eisenstein series%
}
\author{Martin Raum\thanks{The first author was partially supported by Vetenskapsr\aa det Grant~2015-04139.}%
\and%
Jiacheng Xia%
}
\begin{document}

\thispagestyle{scrplain}
\begingroup
\deffootnote[1em]{1.5em}{1em}{\thefootnotemark}
\maketitle
\endgroup

{\small
\noindent
{\tbf Abstract:}
We show that every elliptic modular form of integral weight greater than~$1$ can be expressed as linear combinations of products of at most two cusp expansions of Eisenstein series. This removes the obstruction of nonvanishing central $\rmL$-values present in all previous work. For weights greater than~$2$, we refine our result further, showing that linear combinations of products of exactly two cusp expansions of Eisenstein series suffice.
\\[.35em]
\textsf{\textbf{%
  central values of $\rmL$-functions%
}}%
\hspace{0.3em}{\tiny$\blacksquare$}\hspace{0.3em}%
\textsf{\textbf{%
  vector-valued Hecke operators%
}}%
\hspace{0.3em}{\tiny$\blacksquare$}\hspace{0.3em}%
\textsf{\textbf{%
  products of Eisenstein series%
}}
\\[0.15em]
\noindent
\textsf{\textbf{%
  MSC Primary:
  11F11%
}}
\hspace{0.3em}{\tiny$\blacksquare$}\hspace{0.3em}%
\textsf{\textbf{%
  MSC Secondary:
  11F67, 11F25
}}
}

\vspace{1.5em}

\Needspace*{4em}
\addcontentsline{toc}{section}{Introduction}
\markright{Introduction}
\lettrine[lines=2,nindent=.2em]{\tbf K}{ohnen-Zagier} proved in their work on periods of modular forms~\cite{kohnen-zagier-1984} that every modular form of level~$1$ can be written as a linear combination of products of at most two Eisenstein series. Their insight provides a precise connection between the resulting expressions for cuspidal Hecke eigenforms and the special values of the associated $\rmL$-functions. This connection stimulated subsequent work by, for instance, Borisov-Gunnells~\cite{borisov-gunnells-2001,borisov-gunnells-2001b,borisov-gunnells-2003}, Kohnen-Martin~\cite{kohnen-martin-2008}, the first named author~\cite{raum-2017}, and Dickson-Neururer~\cite{dickson-neururer-2018}, who investigated the case of higher levels. The nonvanishing of specific $\rmL$-values was crucial in all cases. For levels that are square-free away from at most two primes, Dickson-Neururer obtain a characterization of weight~$2$ newforms that can be expressed as a linear combination of products of at most two Eisenstein series for the congruence subgroup~$\Gamma_1(N)$. These are exactly those newforms whose central~$\rmL$-values do not vanish. In particular, results for newforms of weight~$2$ whose central~$\rmL$-values vanish are not included in any of the cited papers.

The condition on the central~$\rmL$-value for weight~$2$ newforms is a severe restriction in light of the Birch-Swinnerton-Dyer Conjecture which relates it to the rank of the Mordell-Weil groups of elliptic curves.  For instance, if a newform~$f$ of weight~$2$ has rational Fourier coefficients and negative Atkin-Lehner eigenvalue, it corresponds to an elliptic curve over~$\QQ$ with Mordell-Weil--rank at least~$1$ by work of Gross-Zagier~\cite{gross-zagier-1986}; See~\cite{goldfeld-1979,bhargava-shankar-2015,bhargava-skinner-2014} for a discussion of and results on distributions of ranks of elliptic curves. However, the case of vanishing central $\rmL$-values of weight~$2$ newforms is excluded from all available statements on products of Eisenstein series. In the present paper we close this gap; See Equation~\eqref{eq:thm:product-of-eisenstein-series:with-eisenstein} in Theorem~\ref{thm:product-of-eisenstein-series} and compare with the previously available assertion in Equation~\eqref{eq:dickson-neururer}.

Given positive integers~$k$ and~$N$, we denote by~$\cE_k(N)_\infty$ the space of functions spanned by Fourier expansions at~$\infty$ of all Eisenstein series of weight~$k$ and level~$N$, i.e., for $\Ga_1(N)$. The space of Fourier expansions at any cusp of all Eisenstein series of weight~$k$ and level~$N$ is denoted by~$\cE_k(N)$. As opposed to $\cE_k(N)_\infty$ it contains Fourier expansions that feature fractional exponents. Write $\rmM_k(\Ga)$ for the space of weight~$k$ modular forms for a group~$\Ga \subseteq \SL{2}(\ZZ)$ and $\rmM^\new_k(\Ga)$ for the subspace of newforms. The results of Dickson-Neururer, which hold if~$N$ is the product of two prime powers and a square-free integer, can be formulated as follows:
\begin{gather}
\label{eq:dickson-neururer}
\begin{alignedat}{3}
&
  \rmM_k(\Ga_0(N))
&&\;\subseteq\;
  \cE_{k}(N)_\infty
  \,+\,
  \sum_{l = 1}^{k-1}
  \cE_{k-l}(N)_\infty \,\cdot\, \cE_l(N)_\infty
\tx{,}\quad
&&
  \tx{if $k > 2$;}
\\
&
  \lspan \CC\big\{\, f \in \rmM^\new_2(\Ga_0(N)) \,:\, \rmL(f,1) \ne 0 \,\big\}
&&\;\subseteq\;
  \cE_2(N)_\infty
  \,+\,
  \cE_1(N)_\infty \,\cdot\, \cE_1(N)_\infty
\tx{.}
\end{alignedat}
\end{gather}

The main theorem of the present paper improves significantly on the second statement and drops completely the condition on~$N$. It also provides a variant of the first statement by suppressing the sum over weights~$l$, again without any condition on~$N$. One novel aspect of our main theorem is that we can omit Eisenstein series~$\cE_{k+l}(N)$ from the right hand side of Equation~\eqref{eq:thm:product-of-eisenstein-series:without-eisenstein}, which holds for~$k + l \ge 3$. Another one is that both~$k$ and~$l$ are fixed in Theorem~\ref{thm:product-of-eisenstein-series}.
\begin{maintheorem}
\label{thm:product-of-eisenstein-series}
Let~$k$, $l$, and~$N$ be positive integers. Then there is a positive integer~$N_0$ such that
\begin{gather}
\label{eq:thm:product-of-eisenstein-series:with-eisenstein}
  \rmM_{k+l}(\Ga(N))
\;\subseteq\;
  \cE_{k+l}(N)
  \,+\,
  \cE_k(N_0) \,\cdot\, \cE_l(N_0)
\tx{.}
\end{gather}
Moreover, if $k + l \ge 3$, then a suitable~$N_0$ is explicitly specified in Theorem~\ref{thm:cusp-forms-in-eisenstein-product} on page~\pageref{thm:cusp-forms-in-eisenstein-product}, and there is a positive integer~$N_1$---specified explicitly in Theorem~\ref{thm:eisenstein-series-in-eisenstein-product} on page~\pageref{thm:eisenstein-series-in-eisenstein-product}---such that
\begin{gather}
\label{eq:thm:product-of-eisenstein-series:without-eisenstein}
  \rmM_{k+l}(\Ga(N))
\;\subseteq\;
  \cE_k(\lcm(N_0,N N_1)) \,\cdot\, \cE_l(\lcm(N_0,N_1))
\tx{.}
\end{gather}
\end{maintheorem}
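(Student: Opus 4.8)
The plan is to split $\rmM_{k+l}(\Ga(N))$ into its cuspidal and Eisenstein parts and handle them by two separate auxiliary statements (a ``cusp-form theorem'' and an ``Eisenstein-series theorem'', corresponding to Theorems~\ref{thm:cusp-forms-in-eisenstein-product} and~\ref{thm:eisenstein-series-in-eisenstein-product}), phrasing everything in terms of vector-valued modular forms for $\SL{2}(\ZZ)$. The point of that viewpoint is that the full collection of cusp expansions of Eisenstein series of weight $k$ and level $N$ assembles into the components of a vector-valued Eisenstein series for a finite-dimensional representation of $\SL{2}(\ZZ)$, and $\cE_k(N_0)\cdot\cE_l(N_0)$ is then spanned by the components of a tensor product of two such. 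The decisive structural input, which I would establish first, is that this product span is stable under a large family of operators: vector-valued Hecke operators, the translations $z\mapsto z+a/c$ (these permute cusp expansions of Eisenstein series among themselves, hence realize twisting by Dirichlet characters after a controlled increase of level), and Atkin-Lehner-type operators. This stability is exactly what will let us trade ``$\rmL(f,1)\neq 0$ for the given $f$'' for the far weaker ``$\rmL(f\otimes\psi,1)\neq 0$ for some Dirichlet character $\psi$''.

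For the cuspidal part I would run the Rankin-Selberg/Kohnen-Zagier machinery. Writing $w=k+l$ and letting $f$ be a normalized newform of weight $w$ and level dividing $N$, one pairs $f$ against products of Eisenstein series with Dirichlet characters and unfolds the Rankin-Selberg integral; at the relevant point this expresses the Petersson inner product in terms of twisted values $\rmL(f\otimes\psi,m)$ for $\psi$ built from the characters of the two factors and for $m$ ranging over a window determined by $k$ and $l$. When $w\ge 3$ this window meets the range where the Euler product forces nonvanishing (its region of absolute convergence together with the boundary line), so no twist is needed, which is precisely why for $w\ge 3$ one may afterwards drop the standalone $\cE_{k+l}(N)$ summand and even demand products of exactly two Eisenstein series. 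When $w=2$ (so $k=l=1$) the only critical value is the central $\rmL(f\otimes\psi,1)$, and here I would invoke a nonvanishing theorem for central values of twisted $\rmL$-functions --- of Rohrlich type, or via an elementary average of $\rmL(f\otimes\psi,1)$ over $\psi$ modulo a large prime --- to produce, for each of the finitely many newforms of level dividing $N$, at least one admissible $\psi$. In either regime one concludes that $f\otimes\psi$ lies in $\cE_k(N_0)\cdot\cE_l(N_0)$ for $N_0$ large enough to absorb all conductors that occurred; applying the twisting operator by $\bar\psi$, which preserves the product span by the stability statement, returns $f$ itself, and Hecke-stability together with the irreducibility of the span of $f$ and all its oldform images under the available operators upgrades ``$f$ is not orthogonal to the product span'' to ``$f$ lies in it''. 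Tracking the conductors through this chain produces the \emph{explicit} $N_0$ of Theorem~\ref{thm:cusp-forms-in-eisenstein-product}.

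For the Eisenstein part I would show, essentially by explicit computation, that the weight-$w$ Eisenstein component of $\cE_k(N_1)\cdot\cE_l(N_1)$ exhausts $\cE_{k+l}(N)$ once $N_1$ is large enough: a product of Eisenstein series is a modular form whose constant terms at every cusp can be written down from Hurwitz-type formulas, so the task reduces to the finite linear-algebra check that these constant-term vectors span the full space of weight-$w$ Eisenstein constant-term vectors for $\Ga(N)$; the exceptional case $w=2$ is genuinely obstructed (the quasimodular $E_2$), which is why the plain $\cE_{k+l}(N)$ summand is retained there. Subtracting the Eisenstein part from a given $g\in\rmM_{k+l}(\Ga(N))$ leaves a cusp form, which by the previous paragraph already lies in $\cE_k(N_0)\cdot\cE_l(N_0)$; combining the two contributions and passing to least common multiples of the levels involved yields \eqref{eq:thm:product-of-eisenstein-series:without-eisenstein} with $N_1$ as in Theorem~\ref{thm:eisenstein-series-in-eisenstein-product}, while \eqref{eq:thm:product-of-eisenstein-series:with-eisenstein} follows by simply keeping the $\cE_{k+l}(N)$ term, which is in any case forced when $k+l=2$.

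I expect the main obstacle to be the weight-$2$ case: the whole argument rests on replacing the vanishing-prone central value $\rmL(f,1)$ by a nonvanishing twist, and that move is legitimate only because the product span is twist-stable, so the real work lies in pinning down that stability cleanly in the vector-valued framework (that translates of cusp expansions of Eisenstein series remain such, with a controlled increase of level) and then propagating the level bookkeeping through the Rankin-Selberg unfolding, the twisting-back step and the newform-packet argument so as to obtain the \emph{explicit} $N_0$ and $N_1$ rather than merely their existence.
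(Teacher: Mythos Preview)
Your overall architecture matches the paper's: reduce to the vector-valued setting, split into cusp forms and Eisenstein series, attack the cusp forms via Rankin--Selberg plus stability of the product span under vector-valued Hecke operators and twisting, and handle the Eisenstein part by matching constant terms. That is exactly how Theorems~\ref{thm:cusp-forms-in-eisenstein-product} and~\ref{thm:eisenstein-series-in-eisenstein-product} are organised.

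There is, however, one genuine oversight in your case analysis. The Rankin--Selberg unfolding with weights $k$ and $l$ fixed does \emph{not} produce a ``window'' of special values: it produces exactly the pair $\rmL(f,k+l-1)\cdot \rmL(f\otimes\psi,l)$ (Lemma~\ref{la:petersson-scalar-product-against-eisenstein-product}). The first factor is harmless for $k+l\ge 2$, but the second sits at the centre of the critical strip whenever $k=l$, not only when $k=l=1$. So your claim that ``for $w\ge 3$ no twist is needed'' fails for $k=l\ge 2$: there you must still produce a $\psi$ with $\rmL(f\otimes\psi,l)\ne 0$. The paper does this via Waldspurger/Kohnen--Zagier, choosing $\psi$ a Kronecker character $\epsilon_D$ and bounding $|D|$ by a Sturm bound in weight $k+\tfrac12$; that is where the factor $(16\,B(k+\tfrac12,N))^4$ in the explicit $N_0$ comes from. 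For $k=l=1$ both factors are central, and the paper invokes Ono--Skinner \emph{twice} (once to make $\rmL(f^\rmc\otimes\epsilon_D,1)\ne 0$, once more to make $\rmL(f^\rmc\otimes\epsilon_D\epsilon_{D'},1)\ne 0$); in particular $N_0$ is \emph{not} made explicit in that case, contrary to your expectation. Your Rohrlich-or-average suggestion is plausible as an alternative, but you would still need the double nonvanishing, and you should be aware that this is precisely the place where explicitness is lost.

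On the Eisenstein side, the paper does not run a direct Hurwitz-type computation of constant terms. Instead it exploits that for $N_1$ coprime to $N$ one has $\rho^\times_{NN_1}\cong\rho^\times_N\otimes\rho^\times_{N_1}$, picks an irreducible $\sigma\hookrightarrow\cE_l(N_1)$ (nontrivial when $k=2$), and builds an explicit $\SL{2}(\ZZ)$-equivariant embedding $(\rho'^\vee\otimes\rho^\times_N)^{\SL{2}(\ZZ)}\hookrightarrow(\cE_k(NN_1)\otimes\cE_l(N_1)\otimes\rho^\times_N)^{\SL{2}(\ZZ)}$ whose image has the prescribed constant term. Your linear-algebra-on-constant-terms plan would likely also work, but note that the obstruction at $k=l=1$ is not merely ``$E_2$ is quasimodular'': it is that the representation-theoretic embedding above requires $k\ge 2$ (or its symmetric counterpart $l\ge 2$), and the paper explicitly leaves $k=l=1$ open for the Eisenstein part.
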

\begin{mainremarkenumerate}
\item 
Theorem~\ref{thm:product-of-eisenstein-series} is a consequence of Theorem~\ref{thm:cusp-forms-in-eisenstein-product} and Theorem~\ref{thm:eisenstein-series-in-eisenstein-product} in conjunction with Section~\ref{ssec:vector-valued-to-classical}, which revisits the connection between vector-valued and classical modular forms.

\item
Besides the case of principal congruence subgroups, Theorem~\ref{thm:cusp-forms-in-eisenstein-product} and Theorem~\ref{thm:eisenstein-series-in-eisenstein-product} also cover the cases of modular forms for~$\Ga_1(N)$, for~$\Ga_0(N)$, and for Dirichlet characters~$\chi$, and, most generally, of vector-valued modular forms. 

\item Theorem~\ref{thm:cusp-forms-in-eisenstein-product} and Theorem~\ref{thm:eisenstein-series-in-eisenstein-product} contain precise statements about which subspaces of the right hand sides of~\eqref{eq:thm:product-of-eisenstein-series:with-eisenstein} and~\eqref{eq:thm:product-of-eisenstein-series:without-eisenstein} equal which modular forms. For example, the space~$\rmM_k(\chi)$ of modular forms for a Dirichlet character~$\chi$ modulo~$N$ equals the following space of~$\Ga_0(N)$-invariants:
\begin{gather*}
  \rmM_k(\chi)
\;=\;
  \Big(
  \big(
  \cE_{k+l}(N)
  \,+\,
  \cE_k(N_0) \,\cdot\, \cE_l(N_0)
  \big) \otimes \chi
  \Big)^{\Ga_0(N)}
\tx{,}
\end{gather*}
where $\chi$ stands for the $\Ga_0(N)$ right representation $\begin{psmatrix} a & b \\ c & d \end{psmatrix} \mto \ov{\chi}(d)$ and $\Ga_0(N)$ acts on the spaces~$\cE_{k+l}(N)$, $\cE_k(N_0)$, and~$\cE_l(N_0)$ from the right via the usual slash actions~$|_{k+l}$, $|_k$, and $|_l$.

\item An explicit bound for~$N_0$ in the case of $k + l = 2$ could be obtained from an effective bound on gaps in the Fourier expansion of weight~$\frac{3}{2}$ modular forms.

\item Computer experiments for small~$N$ suggest that the second part of Theorem~\ref{thm:product-of-eisenstein-series} also holds true if~$k + l = 2$.
\end{mainremarkenumerate}

The first named author suggested in~\cite{raum-2017} that expressions for modular forms in terms of Eisenstein series can be employed to compute cusp expansions of modular forms of levels that are not square-free. Observe that algorithms rooted in modular symbols, which currently are the primary methods to compute elliptic modular forms, only reveal Fourier expansions at cusps mapped to~$\infty$ by Atkin-Lehner involutions. If the level is not square-free this is a proper subset of cusps. Cohen has implemented this idea in Pari/GP~\cite{belabas-cohen-2018,cohen-2019}. He built up on the results of Borisov-Gunnells, who restricted themselves to weights greater than~$2$. Theorem~\ref{thm:product-of-eisenstein-series} in this paper allows us to perform a similar computation of Fourier expansions of weight~$2$ modular forms. More precisely, since the action of~$\SL{2}(\ZZ)$ on the right hand side of~\eqref{eq:thm:product-of-eisenstein-series:with-eisenstein} and~\eqref{eq:thm:product-of-eisenstein-series:without-eisenstein} is known explicitly, Theorem~\ref{thm:product-of-eisenstein-series} yields a possibility to determine Fourier expansions of modular forms at all cusps. Tobias Magnusson and the first named author are preparing an implementation of this.

We now explain the three key differences of the present paper compared to previous work~\cite{kohnen-zagier-1984,borisov-gunnells-2001,borisov-gunnells-2001b,borisov-gunnells-2003,kohnen-martin-2008,dickson-neururer-2018}. The first key difference is the appearance of $\cE_k(N)$ as opposed to $\cE_k(N)_\infty$. A less general version of this was already used in~\cite{raum-2017}. Indeed, the vector-valued Hecke operator $\rmT_M$ in~\cite{raum-2017} produces from the Eisenstein series $E_k$ of level~$1$ the expansions at all cusps of the associated oldform~$E_k(M \,\cdot\,)$. The space spanned by the cusp expansion of a modular form at infinity, in general, does not carry an action of~$\SL{2}(\ZZ)$, but the space spanned by cusp expansions at all cusps does. Passing from $\cE_k(N)_\infty$ to $\cE_k(N)$ allows us to employ representation theoretic machinery and the theory of vector-valued Hecke operators developed in~\cite{raum-2017}.

The second key difference is that both~$k$ and~$l$ are fixed in Theorem~\ref{thm:product-of-eisenstein-series}, while $l$ must run in~\eqref{eq:dickson-neururer}. This directly impacts the strategy of proof, since the varying weight~$l$ gives access to almost the complete period polynomial as opposed to a single special $\rmL$-value. In the case of weight~$2$ modular forms, however, the approach of~\cite{rankin-1952,kohnen-zagier-1984} merely reveals parts of the period polynomial, excluding the central $\rmL$-value. Given a modular form, the vanishing of its central $\rmL$-value is not strong enough to imply the vanishing of the modular form. Among the innovations of~\cite{raum-2017} was to fix the weights of Eisenstein series, but vary their levels. This yields a relation to the nonvanishing problem for families of special $\rmL$-values, which can also be solved for weight~$2$ modular forms.

The third key difference is that the right hand side of~\eqref{eq:thm:product-of-eisenstein-series:without-eisenstein} displays only products of two Eisenstein series, omitting the additional space of weight~$(k+l)$ Eisenstein series. This yields a statement about the constant terms of products of Eisenstein series. If $k$ and $l$ are greater than~$2$, it can be derived without difficulties by multiplying suitable Eisenstein series of level~$N$. The cases of~$k \le 2$ or~$l \le 2$, however, require a more detailed analysis. We are leaving one open end in this context. The case of~$k = l = 1$ hinges on a precise understanding of tensor products of certain Weil representations, which we were not able to obtain here.

The proof of~\eqref{eq:thm:product-of-eisenstein-series:with-eisenstein} in Theorem~\ref{thm:cusp-forms-in-eisenstein-product} extends ideas in~\cite{raum-2017}. In particular, we have refined the argument at some places in order to obtain the explicit bound~$N_0$ for the level of Eisenstein series that appears on the right hand side  of~\eqref{eq:thm:product-of-eisenstein-series:with-eisenstein}. Our approach is based on a combination of the theory of vector-valued Hecke operators~\cite{raum-2017} and the Rankin-Selberg method~\cite{rankin-1952,kohnen-zagier-1984}.

The proof of~\eqref{eq:thm:product-of-eisenstein-series:without-eisenstein} in Theorem~\ref{thm:eisenstein-series-in-eisenstein-product} builds up on the statement of Theorem~\ref{thm:cusp-forms-in-eisenstein-product}. The methods that we employ are quite different, however. Specifically, we examine the spaces~$\cE_{k}(N)$, $\cE_{l}(N)$, and $\cE_{k+l}(N)$ as $\SL{2}(\ZZ)$-representations. Their subspaces of vectors fixed by the action of~$T = \begin{psmatrix} 1 & 1 \\ 0 & 1 \end{psmatrix}$ are related to the spaces spanned by constant terms of Eisenstein series. Then Theorem~\ref{thm:eisenstein-series-in-eisenstein-product} follows from an argument from representation theory.

\section{Preliminaries}

We write~$\HS$ for the Poincar\'e upper half plane, which carries an action of~$\SL{2}(\RR)$ by M\"obius transformations. We fix the notation $T = \begin{psmatrix} 1 & 1 \\ 0 & 1 \end{psmatrix} \in \SL{2}(\RR)$ for the transformation acting on~$\HS$ as a translation by~$1$. We write $\Ga_\infty^+ \subset \SL{2}(\ZZ)$ for the subgroup generated by~$T$.

\subsection{Arithmetic types}

An arithmetic type is a finite dimensional, complex representation of~$\SL{2}(\ZZ)$. The representation space of an arithmetic type~$\rho$ is denoted by~$V(\rho)$. We call an arithmetic type a congruence type if its kernel is a congruence subgroup. The level of a congruence type is the level of its kernel. We record that all congruence types are unitarizable.

The trivial, one-dimensional, complex representation of a subgroup~$\Ga \subseteq \SL{2}(\ZZ)$ will be denoted by~$\bbone_\Ga$. Usually, $\Ga$ is clear from the context and we abbreviate~$\bbone_\Ga$ by~$\bbone$.

The induction of arithmetic types is explained in detail in~\cite{raum-2017} using a choice of representatives. We set
\begin{gather*}
  \rho^\times_N
\;:=\;
  \Ind_{\Ga_1(N)}^{\SL{2}(\ZZ)}\, \bbone
\tx{.}
\end{gather*}
Recall from, for example, \cite{carnahan-2012} that
\begin{gather}
\label{eq:ind_Ga1_decomposition}
  \rho^\times_N
\;\cong\;
  \bigoplus_\chi \Ind_{\Ga_0(N)}^{\SL{2}(\ZZ)}\,\chi
\tx{,}
\end{gather}
where $\chi$ runs through Dirichlet characters mod~$N$ considered as representations of~$\Ga_0(N)$ via the assignment $\begin{psmatrix} a & b \\ c & d \end{psmatrix} \mto \chi(d)$. We write~$\rho_\chi$ for its induction to~$\SL{2}(\ZZ)$.

Observe that by Frobenius reciprocity, we have
\begin{gather*}
  \Hom_{\SL{2}(\ZZ)} \big( \bbone, \rho^\times_N \big)
\;\cong\;
  \Hom_{\Ga_{1}(N)} ( \bbone, \bbone )
\tx{,}
\end{gather*}
which is one-dimensional. In particular, $\rho^\times_N$ contains a unique copy of the trivial representation. We write
\begin{gather*}
  \rho^\times_N \ominus \bbone
\end{gather*}
for its orthogonal complement. For this purpose, we choose any unitary structure of the representation~$\rho$, since the result is independent of it.

We say that an arithmetic type~$\rho$ has~$T$-fixed vectors, if there is a nonzero vector~$v \in V(\rho)$ such that $\rho(T) v = v$. The subrepresentation of~$\rho$ on which~$\begin{psmatrix} -1 & 0 \\ 0 & -1 \end{psmatrix}$ in the center of~$\SL{2}(\ZZ)$ acts by~$\pm 1$ is denoted by~$\rho^\pm$. Write $\rho^T$ for the space of~$\Ga_\infty^+$-invariants in~$V(\rho)$. The intersection of~$\rho^T$ with~$\rho^\pm$ is denoted by~$\rho^{T\,\pm}$. We let $\parity(k) = \pm$ be the parity of~$k$, so that $\rho^{\parity(k)}$ denotes the subrepresentation of~$\rho$ on which $\begin{psmatrix} -1 & 0 \\ 0 & -1 \end{psmatrix}$ acts by~$(-1)^k$.

\subsection{Modular forms}

The classical slash actions for~$k \in \ZZ$,
\begin{gather*}
  \big( f \big|_k\, \begin{psmatrix} a & b \\ c & d \end{psmatrix} \big)(\tau)
:=
  (c \tau + d)^{-k}\, f\big(\begin{psmatrix} a & b \\ c & d \end{psmatrix} \tau \big)
\tx{,}
\end{gather*}
extend to vector-valued slash actions
\begin{gather*}
  \big( f \big|_{k,\rho}\, \begin{psmatrix} a & b \\ c & d \end{psmatrix} \big)(\tau)
:=
  (c \tau + d)^{-k}\,
  \rho\big( \begin{psmatrix} a & b \\ c & d \end{psmatrix}^{-1} \big)
  f\big(\begin{psmatrix} a & b \\ c & d \end{psmatrix} \tau \big)
\tx{.}
\end{gather*}

The space of classical modular forms~$\rmM_k(\Ga)$ for a subgroup~$\Ga \subseteq \SL{2}(\ZZ)$ is the space of holomorphic functions~$f :\, \HS \ra \CC$ such that
\begin{enumerateroman*}
\item $f \big|_k\,\ga = f$ for all~$\ga \in \Ga$ and
\item $(f \big|_k\,\ga)(\tau)$ is bounded as $\tau \ra i \infty$ for all~$\ga \in \SL{2}(\ZZ)$.
\end{enumerateroman*}
The subspace~$\rmS_k(\Ga)$ of cusp forms is defined as the space of modular forms that satisfy the stronger second condition $(f \big|_k\,\ga)(\tau) \ra 0$ as $\tau \ra i \infty$ for all~$\ga \in \SL{2}(\ZZ)$.

Recall that we set $\chi(\begin{psmatrix} a & b \\ c & d \end{psmatrix}) = \chi(d)$ for~$\begin{psmatrix} a & b \\ c & d \end{psmatrix} \in \Ga_0(N)$ and a Dirichlet character~$\chi$ modulo~$N$. The space~$\rmM_k(\chi)$ is defined as the space of holomorphic functions~$f :\, \HS \ra \CC$ such that
\begin{enumerateroman*}
\item $f \big|_k\,\ga = \chi(\ga) f$ for all~$\ga \in \Ga_0(N)$ and
\item $(f \big|_k\,\ga)(\tau)$ is bounded as $\tau \ra i \infty$ for all~$\ga \in \SL{2}(\ZZ)$.
\end{enumerateroman*}

Similarly, the spaces of vector-valued modular forms~$\rmM_k(\rho)$ and cusp forms~$\rmS_k(\rho)$ for an arithmetic type~$\rho$ are defined as the spaces of holomorphic functions~$f :\, \HS \ra V(\rho)$ such that
\begin{enumerateroman*}
\item $f \big|_{k,\rho}\,\ga = f$ for all~$\ga \in \SL{2}(\ZZ)$ and
\item $f(\tau)$ is bounded (with respect to some norm on~$V(\rho)$) as $\tau \ra i \infty$ if $f \in \rmM_k(\rho)$ or $f(\tau) \ra 0$ as $\tau \ra i \infty$ if $f \in \rmS_k(\rho)$.
\end{enumerateroman*}

If $\Ga$ is a congruence subgroup, we write $\rmS^\old_k(\Ga) \subseteq \rmS_k(\Ga)$ for the space of oldforms and $\rmS^\new_k(\Ga) \subseteq \rmS_k(\Ga)$ for the set of (normalized) newforms.

There are natural bases for~$\rho^\times_N$, $\rho_\chi$, and their dual representations that are indexed by (a choice of representatives for) the cosets~$\Ga_1(N) \backslash \SL{2}(\ZZ)$ and~$\Ga_0(N) \backslash \SL{2}(\ZZ)$. Specifically, the representation spaces of~$\rho^\times_N$ and $\rho_\chi$ are
\begin{gather}
  V(\rho^\times_N)
\;=\;
  \lspan\CC\,\big\{ \frake_\ga \,:\, \ga \in \Ga_1(N) \backslash \SL{2}(\ZZ) \big\}
\quad\tx{and}\quad
  V(\rho^\times_N)
\;=\;
  \lspan\CC\,\big\{ \frake_\ga \,:\, \ga \in \Ga_0(N) \backslash \SL{2}(\ZZ) \big\}
\tx{.}
\end{gather}
Given a modular form for any of these types, we refer to the component associated with the trivial coset as the component at infinity.

The vector-valued Hecke operators~$\rmT_M$ including their basic properties were introduced in~\cite{raum-2017}. Given a representation~$\rho$, they yield a representation on the vector space
\begin{gather}
  \lspan\CC\,\big\{ \frake_m \,:\, \ga \in \Delta_M \big\}
  \,\otimes\,
  V(\rho)
\tx{,}\quad
  \Delta_M
:=
  \big\{
  m = \begin{psmatrix} a & b \\ 0 & d \end{psmatrix} \,:\,
  a,b,d \in \ZZ,\, 0 < a, d,\, 0 \le b < d,\, ad = M
  \big\}
\tx{.}
\end{gather}
We will give more precise references to the properties of vector-valued Hecke operators when we employ them. At some point in the proof of Theorem~\ref{thm:cusp-forms-in-eisenstein-product}, we will use~$\rmT_M$ to denote the classical Hecke operator, but otherwise it is the vector-valued one.

As a Sturm bound for modular forms in~$\rmM_k(\chi)$, where $\chi$ is a Dirichlet character modulo~$N$, we use
\begin{gather}
\label{eq:def:sturm-bound}
  B(k, N)
\;:=\;
  \Big\lceil
  \frac{k}{12}\,
  N \prod_{\substack{p \isdiv N \\ \tx{$p$ prime}}}\big( 1 + \tfrac{1}{p} \big)
  \Big\rceil
\tx{.}
\end{gather}

\section{Congruence types and their modular forms}

We start this section with a characterization of congruence types that are generated by their~$T$-fixed vectors.
\begin{lemma}
\label{la:T-fixed-vectors-yield-embedding}
Let $\rho$ be an irreducible congruence type of level~$N$ and assume that $\rho$ has $T$-fixed vectors. Then there is an embedding
\begin{gather*}
  \rho
\lhra
  \Ind_{\Ga_1(N)}\,\bbone
\tx{.}
\end{gather*}
\end{lemma}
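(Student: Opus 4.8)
The plan is to exploit Frobenius reciprocity together with the fact that $\Ind_{\Ga_1(N)}\bbone = \rho^\times_N$ contains, roughly speaking, every congruence type of level dividing $N$, but to pin this down via the $T$-fixed vector hypothesis. First I would fix a nonzero $v \in V(\rho)$ with $\rho(T)v = v$, so that $v$ is invariant under $\Ga_\infty^+ = \langle T\rangle$. Since $\rho$ is a congruence type of level $N$, the stabilizer of $v$ contains $\Ga_\infty^+$, but I would like to upgrade this: the key point is that $\ker\rho$ is a congruence subgroup of level $N$, hence contains the subgroup generated by $T^N = \begin{psmatrix} 1 & N \\ 0 & 1 \end{psmatrix}$ together with enough of $\Ga(N)$; in fact I expect that a suitable vector in $\rho^T$ is fixed by a conjugate of $\Ga_1(N)$ — more precisely, I would argue that the line $\CC v$ is stabilized (up to scalar) by the group $\Ga_\infty^+ \cdot \ker\rho$, and that this group contains a conjugate $g\Ga_1(N)g^{-1}$ for some $g \in \SL{2}(\ZZ)$. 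Replacing $v$ by $\rho(g)v$ if necessary, one reduces to the case where $v$ is fixed by $\Ga_1(N)$ itself.

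Granting that, the map $\bbone \to \rho|_{\Ga_1(N)}$, $1 \mapsto v$, is a nonzero homomorphism of $\Ga_1(N)$-representations, so $\Hom_{\Ga_1(N)}(\bbone, \rho) \ne 0$. By Frobenius reciprocity this gives $\Hom_{\SL{2}(\ZZ)}(\Ind_{\Ga_1(N)}\bbone, \rho) \ne 0$, i.e.\ a nonzero $\SL{2}(\ZZ)$-map $\varphi\colon \rho^\times_N \to \rho$. Since $\rho$ is irreducible, $\varphi$ is surjective. To produce an \emph{embedding} $\rho \lhra \rho^\times_N$ rather than a quotient map, I would use that $\rho^\times_N$ is unitarizable (all congruence types are, as recorded in the excerpt): choosing an $\SL{2}(\ZZ)$-invariant inner product on $V(\rho^\times_N)$, the orthogonal complement $(\ker\varphi)^\perp$ is an $\SL{2}(\ZZ)$-subrepresentation of $\rho^\times_N$ isomorphic to $\rho^\times_N/\ker\varphi \cong \rho$ (using irreducibility of $\rho$ and Schur). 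The inclusion $(\ker\varphi)^\perp \hookrightarrow \rho^\times_N$ is then the desired embedding.

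The step I expect to be the main obstacle is the first one: showing that the $T$-fixed hypothesis, combined with $\ker\rho$ being a congruence subgroup of level exactly $N$, forces the existence of a vector fixed by (a conjugate of) $\Ga_1(N)$, as opposed to merely by $\Ga_\infty^+$. The subtlety is that $\rho$ could a priori have $T$-fixed vectors for a ``wrong'' reason unrelated to $\Ga_1(N)$-invariance. The resolution should come from the structure of $\Ga_1(N)$ as a group: $\Ga_1(N)$ is generated by $T = \begin{psmatrix} 1 & 1 \\ 0 & 1 \end{psmatrix}$ together with $\ker\rho \cap \Ga_1(N)$ up to finite index — more carefully, one knows $\Ga_1(N) = \langle \Ga(N), T\rangle$ (this is a standard fact about congruence subgroups), and since $\Ga(N) \subseteq \ker\rho$ by the level hypothesis, any $T$-fixed vector is automatically $\Ga_1(N)$-fixed. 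Thus the obstacle largely dissolves: $v \in \rho^T$ is fixed by $\Ga(N)$ because $\Ga(N) \subseteq \ker\rho$, and fixed by $T$ by assumption, hence fixed by the group they generate, which is $\Ga_1(N)$ — and no conjugation is needed after all. The remaining work is then purely the Frobenius-reciprocity and unitarizability bookkeeping sketched above.
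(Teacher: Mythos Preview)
Your proposal is correct and follows essentially the same route as the paper: the key observation $\Ga_1(N) = \langle \Ga(N), T\rangle$ together with $\Ga(N) \subseteq \ker\rho$ shows that a $T$-fixed vector is $\Ga_1(N)$-fixed, Frobenius reciprocity then produces a nonzero map $\Ind_{\Ga_1(N)}\bbone \to \rho$, and unitarizability upgrades this to an embedding. Your initial worry about conjugates was unnecessary, as you yourself noted, and the paper's proof is just a terser version of the argument you arrived at.
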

\begin{proof}
This is a straightforward application of Frobenius reciprocity. Observe that~$T \in \SL{2}(\ZZ)$ generates~$\Ga_\infty^+$ and that $\Ga_1(N)$ is generated by~$\Ga_\infty^+$ and $\Ga(N) \subseteq \ker(\rho)$. The assumption that $\rho$ be generated by its $T$-fixed vectors can thus be rephrased as
\begin{gather*}
  0
\;\ne\;
  \Hom_{\Ga_\infty^+}\big( \bbone,\, \Res_{\Ga_\infty^+}\,\rho \big)
\;\cong\;
  \Hom_{\Ga_1(N)}\big( \bbone,\, \Res_{\Ga_1(N)}\,\rho \big)
\;\cong\;
  \Hom_{\SL{2}(\ZZ)}\big( \Ind_{\Ga_1(N)} \bbone,\, \rho \big)
\tx{.}
\end{gather*}
Since~$\Ind_{\Ga_1(N)} \bbone$ is unitarizable, this implies that~$\rho$ occurs among its direct summands.
\end{proof}

The next lemma later allows us to focus on congruence types that are generated by their $T$-fixed vectors, so that we can invoke Lemma~\ref{la:T-fixed-vectors-yield-embedding}.
\begin{lemma}
\label{la:hecke-operators-forcing-T-fixed-vectors}
Let~$\rho$ be a congruence type of level~$N$. Then, for every positive integer~$M$ that is divisible by~$N$, there is a subrepresentation~$\rho' \subseteq \rmT_M \rho$ that is generated by its~$T$-fixed vectors and that satisfies $\rho \subseteq \rmT_M\, \rho'$.
\end{lemma}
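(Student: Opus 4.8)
The plan is to construct $\rho'$ explicitly via vector-valued Hecke operators and the structure of the space $\Delta_M$. Recall that $\rmT_M \rho$ is supported on the tensor product $\lspan_\CC\{\frake_m : m \in \Delta_M\} \otimes V(\rho)$, and that among the matrices $m = \begin{psmatrix} a & b \\ 0 & d \end{psmatrix} \in \Delta_M$ there is the distinguished element $\begin{psmatrix} M & 0 \\ 0 & 1 \end{psmatrix}$, whose stabilizer in $\Ga_\infty^+$ is all of $\Ga_\infty^+$ (since $T$ commutes with the upper-triangular scaling in the appropriate sense, or more precisely the coset action of $T$ on this $m$ is trivial because $M \mid N$ kills the relevant shift). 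This should single out a subspace of $\rmT_M \rho$ on which $T$ acts as $\rho(T)$ does on $V(\rho)$ up to the bookkeeping of $\Delta_M$; restricting to $\rho(T)$-fixed vectors of $V(\rho)$ inside that component yields $T$-fixed vectors of $\rmT_M \rho$. But $\rho$ need not have $T$-fixed vectors to begin with, so this direct route is too naive.

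The correct approach is iterative. First I would observe that, by general properties of vector-valued Hecke operators from \cite{raum-2017}, the composite $\rmT_M \rho$ contains $\rho$ as a subrepresentation whenever $N \mid M$ — more precisely, there is a natural inclusion $\rho \hookrightarrow \rmT_M \rho$ coming from the identity coset of $\Delta_M$ together with a splitting, so that also $\rho \subseteq \rmT_M\,\sigma$ for any $\sigma$ with $\rho \subseteq \sigma \subseteq \rmT_M\rho$ that is preserved appropriately. Then I would let $\rho' \subseteq \rmT_M \rho$ be the subrepresentation generated by \emph{all} $T$-fixed vectors of $\rmT_M \rho$. The two things to verify are: (a) $\rho'$ is nonzero and is by construction generated by its $T$-fixed vectors; and (b) $\rho \subseteq \rmT_M\,\rho'$. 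For (a), nonvanishing follows because $\rmT_M \rho$, being built from cosets indexed by $\Delta_M$ with $N \mid M$, always contains vectors fixed by $\Ga_\infty^+$: the component at the matrix $\begin{psmatrix} M & 0 \\ 0 & 1 \end{psmatrix} \in \Delta_M$ tensored with any vector of $V(\rho)$ is $T$-fixed, since the action of $T$ permutes the $\frake_m$ by $b \mapsto b + a \bmod d$ and fixes this particular $m$ (here $a = M$, $d = 1$, so the shift is trivial), and on that component the twist by $\rho$ of the resulting coset representative is trivial because $\Ga(N) \subseteq \ker\rho$.

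For (b), the key input is the semigroup/associativity property of the operators $\rmT_M$: one has a natural transformation relating $\rmT_M \rmT_{M'}$ to $\rmT_{MM'}$, and more usefully a factorization showing $\rho$ is recovered from $\rmT_M$ applied to a large enough piece of $\rmT_M \rho$. Concretely, I expect the statement $\rho \subseteq \rmT_M\,\rho'$ to follow from: the $T$-fixed vectors of $\rmT_M\rho$, under the structure maps of the Hecke operator, already "see" enough of the $\rho$-copy sitting inside $\rmT_M\rho$ to regenerate it after one more application of $\rmT_M$ — essentially because applying $\rmT_M$ a second time re-inserts the identity coset and the adjoint/trace maps between $\rmT_M\rho$ and $\rmT_M\rmT_M\rho$ that recover the original $\rho$ factor through the $\Ga_\infty^+$-invariants (this should be where a lemma of \cite{raum-2017} on the interaction of $\rmT_M$ with restriction to $\Ga_\infty^+$ is quoted verbatim).

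\textbf{Main obstacle.} The delicate point is (b): showing that passing to the subrepresentation generated only by $T$-fixed vectors does not destroy the embedded copy of $\rho$ after reapplying $\rmT_M$. One has to rule out that $\rho' $ is "too small" — that the $T$-fixed vectors of $\rmT_M\rho$ fail to generate a subrepresentation large enough for $\rmT_M\rho'$ to contain $\rho$. I would handle this by a Frobenius-reciprocity / counting argument: decompose $\rmT_M\rho$ into irreducibles, note that every irreducible constituent with a $T$-fixed vector lies in $\rho'$, and show (using $N \mid M$ and Lemma~\ref{la:T-fixed-vectors-yield-embedding} applied constituent-by-constituent) that the constituents of $\rho$ all re-appear among constituents of $\rmT_M$ applied to the sum of such pieces. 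The case where some irreducible constituent of $\rho$ has \emph{no} $T$-fixed vector is exactly the one the Hecke operator is designed to fix, and tracking that requires care with how $\rmT_M$ enlarges the $\Ga_\infty^+$-invariant part — this is the step I would spend the most effort making rigorous.
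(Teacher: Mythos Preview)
Your construction of $T$-fixed vectors in $\rmT_M\rho$ via the component $\frake_{\begin{psmatrix} M & 0 \\ 0 & 1 \end{psmatrix}}$ is correct and matches the paper exactly: for any $v \in V(\rho)$ one has $(\rmT_M\rho)(T)\,(v \otimes \frake_{\begin{psmatrix} M & 0 \\ 0 & 1 \end{psmatrix}}) = \rho(T^M)v \otimes \frake_{\begin{psmatrix} M & 0 \\ 0 & 1 \end{psmatrix}} = v \otimes \frake_{\begin{psmatrix} M & 0 \\ 0 & 1 \end{psmatrix}}$ since $N \amid M$. So part~(a) is fine.

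The gap is precisely where you flag it, in part~(b), and your suggested Frobenius-reciprocity/counting route is not the right lever. The paper's argument is sharper and avoids the difficulty you describe. First, it reduces at the outset to $\rho$ \emph{irreducible} (vector-valued Hecke operators commute with direct sums). Then it proves a stronger fact: for \emph{every} irreducible summand $\rho' \subseteq \rmT_M\rho$, one already has $\rho \subseteq \rmT_M\rho'$. The mechanism is the explicit adjugate embedding of Proposition~2.10 in~\cite{raum-2017},
\begin{gather*}
  \rho \lhra \rmT_M(\rmT_M\rho)\tx{,}\quad v \lmto \sum_{m \in \Delta_M} v \otimes \frake_m \otimes \frake_{m^\#}\tx{,}
\end{gather*}
composed with $\rmT_M\phi$, where $\phi : \rmT_M\rho \thra \rho'$ is the projection onto $\rho'$ (which exists because $\rmT_M\rho$ is unitarizable). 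Writing $\phi(v \otimes \frake_m) = \phi_m(v)$, the composite sends $v \mapsto \sum_m \phi_m(v) \otimes \frake_{m^\#}$; since at least one $\phi_m$ is nonzero, this map is nonzero, and irreducibility of $\rho$ makes it injective. Having established this for \emph{arbitrary} irreducible $\rho'$, one simply picks an irreducible $\rho'$ containing a $T$-fixed vector (such exist by part~(a)), and irreducibility forces $\rho'$ to be generated by it.

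So the step you identify as the main obstacle dissolves once you (i) reduce to irreducible $\rho$ and (ii) use the adjugate formula explicitly rather than invoke it abstractly. Your choice of $\rho'$ as the subrepresentation generated by \emph{all} $T$-fixed vectors would also work (it contains the paper's $\rho'$), but the argument you sketch for it does not close; the paper's argument does, and for a smaller $\rho'$.
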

\begin{proof}
Since vector-valued Hecke operators intertwine with direct sums of arithmetic types, we can and will assume that~$\rho$ is irreducible. By the assumptions $N \isdiv M$, and therefore for any~$v \in V(\rho)$, we have the invariance
\begin{gather*}
  \big( \rmT_M\,\rho \big)(T) \Big( v \otimes \frake_{\begin{psmatrix} M & 0 \\ 0 & 1 \end{psmatrix}} \Big)
=
  \big( \rho(T^M)  v \big) \otimes \frake_{\begin{psmatrix} M & 0 \\ 0 & 1 \end{psmatrix}}
=
  v \otimes \frake_{\begin{psmatrix} M & 0 \\ 0 & 1 \end{psmatrix}}
\tx{.}
\end{gather*}
In particular, $\rmT_M\,\rho$ contains nonzero $T$-fixed vectors.

We record that there is an injection~$\rho \hra \rmT_M (\rmT_M\, \rho)$ by Proposition~2.10 of~\cite{raum-2017}. Consider an arbitrary irreducible subrepresentation~$\rho'$ of~$\rmT_M\, \rho$. Since $\rmT_M\, \rho$ is unitarizable by Lemma~2.4 of~\cite{raum-2017}, we conclude that $\rho'$ is also a quotient of $\rmT_M\, \rho$. Specifically, there is a surjective homomorphism~$\phi :\, \rmT_M\, \rho \thra \rho'$. We will need the concrete shape of~$\phi$. Given~$m \in \Delta_M$, where $\Delta_M$ is as in Section~2.1 of~\cite{raum-2017}, there is a linear map~$\phi_m : V(\rho) \ra V(\rho')$ such that $\phi(v \otimes \frake_m) = \phi_m(v)$ for every~$v \in V(\rho)$. Since $\phi$ is nonzero, there is at least one~$m$ such that $\phi_m$ is not zero.

We claim that $\rho \subseteq \rmT_M\, \rho'$. For a proof, we consider the following composition of homomorphisms, where the first and second one arise from Proposition~2.10 and Proposition~2.5 of~\cite{raum-2017}:
\begin{gather*}
  \rho
\lhra
  \rmT_M \big( \rmT_M\, \rho \big)
\lthra
  \rmT_M\, \rho'
\tx{.}
\end{gather*}
We have to demonstrate that this composition is not zero. To this end, we let $v \in V(\rho)$ and inspect to what element of~$V(\rmT_M\,\rho')$ it is mapped. We write $m^\#$ for the adjugate of~$m \in \Delta_M$ and obtain
\begin{gather*}
  v
\lmto
  \sum_{m \in \Delta_M}
  v \otimes \frake_m \otimes \frake_{m^\#}
\lmto
  \sum_{m \in \Delta_M}
  \phi_m(v) \otimes \frake_{m^\#}
\tx{,}
\end{gather*}
which does not vanish for all~$v$, since~$\phi_m$ is nonzero for some~$m$.

To finish the proof, it suffices to choose some irreducible~$\rho' \subseteq \rmT_M\,\rho$ that contains a nonzero $T$-fixed vector. Since~$\rho$ is irreducible, it is generated by this vector.
\end{proof}

\subsection{Passing from vector-valued to classical modular forms}
\label{ssec:vector-valued-to-classical}

There is a connection between vector-valued modular forms and classical modular forms via induction. For instance, $\rmM_k(\chi)$ for a Dirichlet character~$\chi$ to~$\rmM_k(\rho_\chi)$ are related to each other by Proposition~1.5 of~\cite{raum-2017}. The present section likewise connects Theorem~\ref{thm:product-of-eisenstein-series} to Theorems~\ref{thm:cusp-forms-in-eisenstein-product} and~\ref{thm:eisenstein-series-in-eisenstein-product}.  

Throughout, we fix an integer~$k$. Given a Dirichlet character~$\chi$, recall the map
\begin{gather}
  \Ind :\,
  \rmM_k(\chi)
\lra
  \rmM_k(\rho_\chi)
\tx{,}\quad
  f
\lmto
  \sum_{\ga \in \Ga_0(N) \slash \SL{2}(\ZZ)}
  (f \slash_k\,\ga) \frake_\ga
\end{gather}
in~(1.3) of~\cite{raum-2017} and Proposition~1.5 of~\cite{raum-2017}, asserting that $\Ind$ is an isomorphism. The argument extends to the map
\begin{gather}
\label{eq:induction-of-modular-forms}
  \Ind :\,
  \rmM_k(\Ga)
\lra
  \rmM_k\big( \Ind_\Ga^{\SL{2}(\ZZ)}\,\bbone \big)
\tx{,}\quad
  f
\lmto
  \sum_{\ga \in \Ga \slash \SL{2}(\ZZ)}
  (f \slash_k\,\ga) \frake_\ga
\end{gather}
for any finite index subgroup~$\Ga \subseteq \SL{2}(\ZZ)$.

Let~$\cF$ be a finite dimensional space of functions on~$\HS$ on which~$\SL{2}(\ZZ)$ acts by the slash action of weight~$k$. Then the induction map yields isomorphisms
\begin{gather}
\label{eq:component-at-infty}
  \big( \cF \otimes \chi \big)^{\Ga_0(N)}
\lra
  \big( \cF \otimes \rho_\chi \big)^{\SL{2}(\ZZ)}
\quad\tx{and}\quad
  \cF^\Ga
\lra
  \big( \cF \otimes \Ind_{\Ga}^{\SL{2}(\ZZ)}\,\bbone \big)^{\SL{2}(\ZZ)}
\tx{,}
\end{gather}
where~$\chi$ and~$\Ga$ are as before. Their inverse maps are the projections to the component at infinity. The proof is analogous to the one of Proposition~1.5 of~\cite{raum-2017}.

\subsection{Components of Eisenstein series}

Given an integer~$k$ and an arithmetic type~$\rho$, we write
\begin{gather*}
  \cE_k(\rho)
\;:=\;
  \big\{
  v \circ f \,:\,
  f \in \rmE_k(\rho),\, v \in V(\rho)^\vee
  \big\}
\end{gather*}
for the components of Eisenstein series of weight~$k$ and type~$\rho$. There is an action of $\SL{2}(\ZZ)$ on $\cE_k(\rho)$ via its action on~$V(\rho)^\vee$. We infer from~\eqref{eq:induction-of-modular-forms} that~$\cE_k(N) := \cE_k(\rho^\times_N)$.

The next lemma is a variation of Proposition~3.4 of~\cite{raum-2017}.
\begin{lemma}
\label{la:eisenstein-series-hecke-stable}
Let $N, M \ge 1$. Then for every~$k \ge 1$ we have
\begin{gather*}
  \rmT_M\, \cE_k(N)
\subseteq
  \cE_k(M N)
\tx{.}
\end{gather*}
\end{lemma}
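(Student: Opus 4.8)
The plan is to reduce the claim about $\rmT_M\,\cE_k(N)$ to a statement about vector-valued Eisenstein series together with the behaviour of vector-valued Hecke operators on spaces of Eisenstein series. Recall that $\cE_k(N) = \cE_k(\rho^\times_N)$ consists of the functions $v\circ f$ for $f\in\rmE_k(\rho^\times_N)$ and $v\in V(\rho^\times_N)^\vee$. First I would recall the precise action of $\rmT_M$ on components: by construction $\rmT_M$ takes a vector-valued form $f$ for $\rho$ to a vector-valued form $\rmT_M f$ for the Hecke-twisted type $\rmT_M\rho$ on $\lspan_\CC\{\frake_m : m\in\Delta_M\}\otimes V(\rho)$, and the induced map on components sends $v\circ f$ (for $v\in V(\rho)^\vee$) to the span of the functions $(v\circ f)\big|_k m$ as $m$ ranges over $\Delta_M$, suitably recombined. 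Concretely, the component functions of $\rmT_M f$ are $\CC$-linear combinations of $\tau\mapsto d^{-k} f_j(\tfrac{a\tau+b}{d})$ with $\begin{psmatrix}a&b\\0&d\end{psmatrix}\in\Delta_M$ and $f_j$ a component of $f$; these are exactly the slash images $f_j\big|_k m$ up to the scalar $a^k$ absorbed into the slash normalization.

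**The main step.**
The key input is Proposition~3.4 of~\cite{raum-2017}, which the lemma says it is a variation of: that proposition already establishes $\rmT_M\,\rmE_k(\rho)\subseteq \rmE_k(\rmT_M\rho)$, i.e. applying a vector-valued Hecke operator to an Eisenstein series produces an Eisenstein series for the Hecke-twisted type. Granting this, it remains to identify $\cE_k(\rmT_M\rho^\times_N)$ with a subspace of $\cE_k(MN)=\cE_k(\rho^\times_{MN})$. This is the representation-theoretic heart: one must show that every irreducible constituent of $\rmT_M\,\rho^\times_N$ that carries an Eisenstein series is a congruence type of level dividing $MN$, hence — by the decomposition~\eqref{eq:ind_Ga1_decomposition} and the characterization via $T$-fixed vectors in Lemma~\ref{la:T-fixed-vectors-yield-embedding} — embeds into $\rho^\times_{MN}$. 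The level bound comes from the explicit description of $\Delta_M$: the matrices $m=\begin{psmatrix}a&b\\0&d\end{psmatrix}$ with $ad=M$ conjugate the congruence condition of level $N$ into one of level dividing $aN\mid MN$, so $\ker(\rmT_M\rho^\times_N)\supseteq\Ga(MN)$. Since Eisenstein series only live on types with $T$-fixed vectors (their constant terms at the relevant cusp are nonzero), Lemma~\ref{la:T-fixed-vectors-yield-embedding} then gives $\rho'\hookrightarrow\rho^\times_{MN}$ for each such constituent $\rho'$, and functoriality of $\cE_k(-)$ in the type yields $\cE_k(\rho')\subseteq\cE_k(\rho^\times_{MN})=\cE_k(MN)$.

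**Assembling the argument.**
So the steps, in order, are: (i) recall from~\cite{raum-2017} the explicit formula for $\rmT_M$ on component functions and note that these functions are slash images under $\Delta_M$; (ii) invoke Proposition~3.4 of~\cite{raum-2017} to get $\rmT_M\,\rmE_k(\rho^\times_N)\subseteq \rmE_k(\rmT_M\,\rho^\times_N)$, so that $\rmT_M\,\cE_k(N)\subseteq\cE_k(\rmT_M\,\rho^\times_N)$; (iii) decompose $\rmT_M\,\rho^\times_N$ into irreducibles, discard those without $T$-fixed vectors (they contribute nothing to $\cE_k$), and for each remaining constituent $\rho'$ compute $\ker(\rho')\supseteq\Ga(MN)$ from the shape of $\Delta_M$; (iv) apply Lemma~\ref{la:T-fixed-vectors-yield-embedding} to embed $\rho'\hookrightarrow\rho^\times_{MN}$, and conclude $\cE_k(\rho')\subseteq\cE_k(MN)$; (v) take the union over constituents.

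**Expected obstacle.**
The main obstacle is bookkeeping in step~(iii): verifying that the level of every constituent of $\rmT_M\,\rho^\times_N$ really divides $MN$ and not merely some larger multiple, and handling the normalization constants $a^k$ in the slash action so that the component functions land in $\cE_k$ on the nose rather than only up to scalars. One has to be careful that $\rmT_M$ as defined on page~2 of~\cite{raum-2017} uses the convention with $0\le b<d$ and $ad=M$, and that conjugating $\Ga_1(N)$ by $\begin{psmatrix}a&b\\0&d\end{psmatrix}$ (or rather intersecting the pullback) produces a group containing $\Ga_1(MN)$, which forces $\Ga(MN)\subseteq\ker(\rmT_M\rho^\times_N)$. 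Once that divisibility is pinned down, the rest follows formally from Lemma~\ref{la:T-fixed-vectors-yield-embedding} and the functoriality of $\cE_k(-)$.
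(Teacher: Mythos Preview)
Your approach is correct but takes a genuinely different route from the paper. The paper gives a direct, elementary computation: it writes down the explicit Eisenstein series $G_{k,N,c',d'}(\tau,s) = \sum_{(c,d)\equiv(c',d')\,(N)} (c\tau+d)^{-k}|c\tau+d|^{-2s}$, applies the slash action by $\begin{psmatrix}\alpha&\beta\\0&\delta\end{psmatrix}\in\Delta_M$ to this sum, and regroups the result as a linear combination of the series $G_{k,MN,c'',d''}(\tau,s)$ by splitting the congruence classes modulo~$MN$. For $k\ne2$ one then sets $s=0$; for $k=2$ the paper invokes the $\xi_2$-operator of Bruinier--Funke, noting that holomorphicity is preserved because $\xi_2$ intertwines with the $\SL{2}(\RR)$-action. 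Your argument instead passes through the abstract machinery: Proposition~3.4 of~\cite{raum-2017} for $\rmT_M\,\rmE_k(\rho)\subseteq\rmE_k(\rmT_M\rho)$, a level bound $\Gamma(MN)\subseteq\ker(\rmT_M\rho^\times_N)$, and then Lemma~\ref{la:T-fixed-vectors-yield-embedding} to embed each relevant constituent into $\rho^\times_{MN}$. The paper's computation is self-contained and makes the level~$MN$ transparent without appealing back to~\cite{raum-2017}, and it handles the delicate weights $k=1,2$ explicitly via the Hecke parameter~$s$. Your route is more conceptual and would generalise more readily to other types, but it leans on Proposition~3.4 of~\cite{raum-2017} as a black box (you should check that it covers $k\le2$) and requires the small additional observation that Lemma~\ref{la:T-fixed-vectors-yield-embedding} applies with~$MN$ in place of the exact level~$N'$ of~$\rho'$, since $\Gamma(MN)\subseteq\Gamma(N')\subseteq\ker(\rho')$.
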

\begin{proof}
Given any pair of integers $(c', d')$ and any nonnegative integers~$\alpha,\beta,\delta$ with $\alpha \delta = M$, we see that, for all $s \in \CC$ satisfying~$k + 2 \Re(s) > 2$,
\begin{align*}
&\hphantom{{}=}
  G_{k,N,c',d'}(\tau, s)
  \big|_k\, \begin{psmatrix} \alpha & \beta \\ 0 & \delta \end{psmatrix}
=
  \sum_{\substack{(c,d) \in \ZZ^2 \setminus (0,0) \\ c \equiv c' \pmod{N} \\ d \equiv d' \pmod{N}}}
  (c \tau + d)^{-k} |c \tau + d|^{-2s}
  \big|_k\, \begin{psmatrix} \alpha & \beta \\ 0 & \delta \end{psmatrix}
\\
&{}=
  \delta^{2s}
  \sum_{\substack{(c,d) \in \ZZ^2 \setminus (0,0)\\ c \equiv c' \pmod{N} \\ d \equiv d' \pmod{N}}}
  (\alpha c \tau + \delta d + \beta c)^{-k}
  |\alpha c \tau + \delta d + \beta c|^{-2s}
\\
&{}=
  \delta^{2s}
  \sum_{\substack{
     c'', d'' \pmod{M N} \\
     c'' \equiv \alpha c' \pmod{\alpha N} \\
     d'' \equiv \delta d' + \beta c' \pmod{\gcd(\beta,\delta)N}}}\;
  \sum_{\substack{c,d \in \ZZ^2 \setminus (0,0) \\ c \equiv c'' \pmod{M N} \\ d \equiv d'' \pmod{M N}}}
  (c \tau + d)^{-k} |c \tau + d|^{-2s}
\tx{.}
\end{align*}
If $k \ne 2$ the right hand side is analytic at~$s = 0$ and yields a linear combination of holomorphic Eisenstein series~$G_{k,MN,c'',d''}(\tau)$. We thus obtain the statement directly. In the case of~$k = 2$, a linear combination of the~$G_{2,N,c',d'}(\tau,0)$ lies in $\cE_2(N)$ if and only if its image under the $\xi_2$-operator of~\cite{bruinier-funke-2004} vanishes. Since the $\xi$-operator intertwines with the action of~$\SL{2}(\RR)$, this finishes the proof.
\end{proof}

\section{Cusp forms}

The first lemma of this section is a variant of, for instance, Propositions~3.7 and~4.1 in~\cite{raum-2017} or Corollary~4.2 in~\cite{dickson-neururer-2018}. It relates products of Eisenstein series to special values of~$\rmL$-functions, generalizing results by Rankin~\cite{rankin-1952} and various other authors. We omit the proof, which is mutatis mutandis the one of Corollary~4.2 of~\cite{dickson-neururer-2018}.
\begin{lemma}
\label{la:petersson-scalar-product-against-eisenstein-product}
Fix integers $k, l \ge 1$ and a Dirichlet character~$\chi$ mod~$N$ for some positive integer~$N$. Let~$\psi$ be a primitive Dirichlet character of modulus dividing~$N$ satisfying $\psi(-1) = (-1)^k$. If $k = 2$ assume that $\psi \ne \bbone$, and if $l \le 2$ assume that $\psi \ne \chi$. Then there is
\begin{gather*}
  g
\;\in\;
  \big( \cE_k(N)_\infty \otimes \cE_l(N)_\infty \otimes \chi \big)^{\Ga_0(N)}
\;\subseteq\;
  \rmM_{k+l}(\chi)
\end{gather*}
such that for every~$f \in \rmS_{k+l}(\chi)$, we have
\begin{gather}
\label{eq:la:petersson-scalar-product-against-eisenstein-product}
  \big\langle g,\, f \big\rangle
\;=\;
  c \cdot
  \rmL\big( f^\rmc, k+l-1 \big)\,
  \rmL\big( f^\rmc \otimes \psi, l \big)
\end{gather}
for a nonzero constant~$c$. Here $f^\rmc(\tau) := \ov{f(-\ov{\tau})}$ denotes the modular form whose Fourier coefficients are the complex conjugate of those of~$f$.
\end{lemma}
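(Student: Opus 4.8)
The plan is to run the Rankin--Selberg method of~\cite{rankin-1952,kohnen-zagier-1984} in the form given by Corollary~4.2 of~\cite{dickson-neururer-2018}; I sketch the argument and highlight where the hypotheses on~$\psi$ are used. For~$g$ I would take an explicit product of two Eisenstein series. As the weight~$k$ factor I would use a weight~$k$ Eisenstein series~$E$, of level dividing~$N$, with $a_n(E) = \sum_{d \isdiv n} \psi(d) d^{k-1}$ for $n \ge 1$: the assumption $\psi(-1) = (-1)^k$ makes~$E$ a genuine holomorphic modular form, and for $k = 2$ the assumption $\psi \ne \bbone$ is exactly what is needed (otherwise one is only left with the quasimodular~$E_2$). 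As the weight~$l$ factor I would use the weight~$l$ Eisenstein series~$E'$ for~$\Ga_0(N)$ attached to the cusp~$\infty$, with nebentypus $\epsilon := \chi \ov\psi$ and normalized to have constant term~$1$ at~$\infty$; for $l \le 2$ the assumption $\psi \ne \chi$ forces $\epsilon \ne \bbone$, and this is what keeps~$E'$ holomorphic in those low weights. Since~$E$ has nebentypus~$\psi$, $E'$ has nebentypus~$\epsilon$, and $\psi\epsilon = \chi$, the product $g := E \cdot E'$ is a holomorphic form in~$\rmM_{k+l}(\chi)$ that represents an element of $\big( \cE_k(N)_\infty \otimes \cE_l(N)_\infty \otimes \chi \big)^{\Ga_0(N)}$.

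The heart of the proof is the unfolding step. I would realize~$E'$ as the value at $s = 0$ of the real-analytic Eisenstein series $G(\tau, s) := \sum_{\ga \in \Ga_\infty^+ \backslash \Ga_0(N)} \ov\epsilon(\ga)\, \Im(\ga\tau)^s\, (c_\ga \tau + d_\ga)^{-l}$, which converges absolutely for $\Re(s)$ large and has a meromorphic continuation to $s = 0$. For $\Re(s)$ large one unfolds the Petersson scalar product of $E \cdot G(\,\cdot\,, s)$ against~$f$ along $\Ga_\infty^+ \backslash \Ga_0(N)$---the automorphy factors and characters cancel because $\psi\epsilon\ov\chi = \bbone$---to arrive at
\begin{gather*}
  \big\langle E \cdot G(\,\cdot\,, s),\, f \big\rangle
\;=\;
  \int_0^\infty\!\!\int_0^1 E(\tau)\, \ov{f(\tau)}\, y^{s + k + l - 2}\, \rmd x\, \rmd y
\\
\;=\;
  \frac{\Gamma(s + k + l - 1)}{(4\pi)^{s + k + l - 1}}
  \sum_{n \ge 1}
  \frac{\ov{a_n(f)}\, \sum_{d \isdiv n} \psi(d) d^{k-1}}{n^{s + k + l - 1}}
\tx{,}
\end{gather*}
where the $x$-integral extracts the diagonal Fourier coefficients---the constant term of~$E$ disappears because~$f$ is cuspidal---and the $y$-integral is a $\Gamma$-integral.

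It remains to factor the Dirichlet series. Writing~$f^\rmc$ for the form with Fourier coefficients~$\ov{a_n(f)}$, Rankin's lemma---this is the point at which one uses that~$f$, equivalently~$f^\rmc$, is a Hecke eigenform---identifies $\sum_{n \ge 1} \ov{a_n(f)}\, \big( \sum_{d \isdiv n} \psi(d) d^{k-1} \big)\, n^{-w}$ with $\rmL(f^\rmc, w)\, \rmL(f^\rmc \otimes \psi,\, w - k + 1)$ divided by one Dirichlet $\rmL$-value, up to finitely many Euler factors at the primes dividing~$N$. Taking $w = k + l - 1$, so that $w - k + 1 = l$, and letting $s \to 0$, the left-hand side tends to $\langle g, f \rangle$ because $G(\,\cdot\,, s) \to E'$ as $s \to 0$, while on the right-hand side $\Gamma(k + l - 1)$ is finite and nonzero and the Dirichlet $\rmL$-value in the denominator is---thanks to the hypotheses on~$\psi$---again finite and nonzero. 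This gives $\langle g, f \rangle = c \cdot \rmL(f^\rmc, k + l - 1)\, \rmL(f^\rmc \otimes \psi, l)$ with $c \ne 0$. The lemma asserts only this identity; nonvanishing of the two $\rmL$-values for a suitable~$\psi$ is a separate matter that is exploited where the lemma is applied.

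The step I expect to be the main obstacle is making the low-weight cases $k \le 2$ and $l \le 2$ fully rigorous. In those ranges neither~$E$ nor $G(\,\cdot\,, 0)$ is given by an absolutely convergent series, the Rankin--Selberg integrand is not of rapid decay, and one has to combine Hecke's analytic-continuation trick with a regularized form of the unfolding and justify interchanging the limit $s \to 0$ with the integral. The precise function of the hypotheses $\psi \ne \bbone$ (for $k = 2$) and $\psi \ne \chi$ (for $l \le 2$) is exactly to keep the two Eisenstein series holomorphic and the continued integral regular at $s = 0$. Apart from this, and the routine bookkeeping of normalizing constants and of the Euler factors at the primes dividing~$N$, everything follows the proof of Corollary~4.2 of~\cite{dickson-neururer-2018} step for step.
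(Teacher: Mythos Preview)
Your proposal is correct and follows exactly the approach the paper takes: the paper in fact omits the proof entirely, pointing to Corollary~4.2 of~\cite{dickson-neururer-2018} and stating that the argument carries over mutatis mutandis, and your sketch is a faithful unpacking of precisely that Rankin--Selberg computation. Your identification of where each hypothesis on~$\psi$ enters (holomorphicity of the two Eisenstein factors in low weight) and of the unfolding/factorisation steps matches the cited source, so there is nothing to add.
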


While Lemma~\ref{la:petersson-scalar-product-against-eisenstein-product} accounts solely for the analytic machinery required for the proof of Theorem~\ref{thm:cusp-forms-in-eisenstein-product}, we split the representation theoretic instrumentation into several separate statements. We will employ the next lemma to discard the space of oldforms from our considerations, so that we can focus on the span of newforms.
\begin{lemma}
\label{la:oldforms}
Let $M, N, N'$ be positive integers and~$f \in \rmM^\old_k(\Ga_1(N))$ an oldform. Assume that there is a modular form~$g$ of level~$N'$ such that $f(\tau) = g(M \tau)$. Then we have
\begin{gather*}
  \Ind(f)
\;\in\;
  \rmT_M\, \rmM_k\big( \rho^\times_{N'} \big)
\tx{.}
\end{gather*}
\end{lemma}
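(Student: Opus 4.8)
The plan is to unwind the definition of the vector-valued Hecke operator~$\rmT_M$ acting on $\rmM_k(\rho^\times_{N'})$ and to match it explicitly against the induction map~\eqref{eq:induction-of-modular-forms} applied to~$f$. First I would recall from~\cite{raum-2017} that $\rmT_M$ sends a vector-valued form $h \in \rmM_k(\rho^\times_{N'})$ to the form whose $m$-component, for $m \in \Delta_M$, is essentially $h |_k m$ (suitably normalized and re-indexed via the coset action), so that in particular the component at infinity of $\rmT_M h$ contains contributions of $h$ slashed by the upper-triangular matrices $m = \begin{psmatrix} a & b \\ 0 & d \end{psmatrix}$ with $ad = M$. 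Since $g$ has level $N'$, its induction $\Ind(g) \in \rmM_k(\rho^\times_{N'})$ is a genuine vector-valued modular form, and the natural candidate is to show
\begin{gather*}
  \Ind(f)
\;=\;
  \big( \rmT_M\, \Ind(g) \big)
\end{gather*}
up to a nonzero scalar, or more precisely that $\Ind(f)$ lies in the image of $\rmT_M$ applied to $\Ind(g)$, exploiting that $f(\tau) = g(M\tau)$ means $f = g |_k \begin{psmatrix} M & 0 \\ 0 & 1 \end{psmatrix}$ up to the standard power of $M$.

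The key steps, in order: (1) write down the precise formula for $\rmT_M$ on components, citing the relevant proposition of~\cite{raum-2017}; (2) compute the component at infinity of $\rmT_M\,\Ind(g)$ and observe that the summand indexed by $m = \begin{psmatrix} M & 0 \\ 0 & 1 \end{psmatrix}$ produces exactly $g |_k \begin{psmatrix} M & 0 \\ 0 & 1 \end{psmatrix} = M^{\pm k/2} f$; (3) use the isomorphism in~\eqref{eq:component-at-infty}, namely that a vector-valued form in $\rmM_k(\rho^\times_{N'})$ or in the target of $\rmT_M$ is determined by its component at infinity together with $\SL{2}(\ZZ)$-equivariance, to conclude that matching the infinity component forces $\Ind(f)$ to equal the corresponding $\SL{2}(\ZZ)$-translate inside $\rmT_M\,\rmM_k(\rho^\times_{N'})$; (4) check that the remaining components, obtained by slashing with coset representatives of $\Ga_1(N)\backslash\SL{2}(\ZZ)$, also match — this is automatic once the infinity components agree and both sides transform under the appropriate type, since $\rmT_M\,\rho^\times_{N'}$ is exactly the type carrying $\Ind$ of oldforms coming from level $N'$ via $M$-scaling.

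The main obstacle I anticipate is a bookkeeping one rather than a conceptual one: making sure that the arithmetic type attached to $\rmT_M\,\rmM_k(\rho^\times_{N'})$ is large enough (i.e. that its restriction to the relevant coset space surjects onto the components needed to see $\Ind(f)$ for the group $\Ga_1(N)$, where $N$ is a priori unrelated to $M N'$), and that the $T$-equivariance built into $\Delta_M$ does not obstruct the component at infinity from realizing $f$. Concretely, one must verify that $f$ having level $N$ with $f(\tau) = g(M\tau)$ and $g$ of level $N'$ forces the right divisibility relations ($N' \mid N$ and $M N' \mid N$ or similar), which is standard for oldform embeddings and is precisely what makes the $m = \begin{psmatrix} M & 0 \\ 0 & 1 \end{psmatrix}$ component land in the correct space. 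Once these divisibilities are in place, the identification of $\Ind(f)$ with an element of $\rmT_M\,\rmM_k(\rho^\times_{N'})$ follows from the uniqueness statement in~\eqref{eq:component-at-infty}; I would present this last step as the crux and keep the Hecke-operator formula and the coset bookkeeping as supporting computations referenced to~\cite{raum-2017}.
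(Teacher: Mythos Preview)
Your proposal is correct and follows the same underlying idea as the paper, but the paper's proof is much terser: it simply observes that $\Ind(g) \in \rmM_k(\rho^\times_{N'})$ by Proposition~1.5 of~\cite{raum-2017} and then invokes Proposition~2.17 of~\cite{raum-2017}, which packages exactly the computation you describe (that $\rmT_M$ applied to an induced form recovers the induction of the oldform $g(M\,\cdot\,)$). In other words, you are proposing to reprove Proposition~2.17 by hand rather than cite it.

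One caution on your step~(3): the isomorphism~\eqref{eq:component-at-infty} is formulated for induced types $\rho_\chi$ and $\Ind_\Ga\bbone$, not directly for $\rmT_M\,\rho^\times_{N'}$, so the ``component at infinity determines the form'' principle needs an extra word of justification in that setting (e.g.\ via an embedding of $\rmT_M\,\rho^\times_{N'}$ into an induced representation, which is implicit in the level bounds for $\rmT_M$). This is exactly the kind of bookkeeping that Proposition~2.17 of~\cite{raum-2017} already handles, which is why the paper opts to cite rather than recompute.
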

\begin{proof}
By Proposition~1.5 of~\cite{raum-2017}, we have $\Ind(g) \in \rmM_k(\rho^\times_{N'})$.
 Now the lemma is a special case of Proposition~2.17 of~\cite{raum-2017}.
\end{proof}

A key difference between~\cite{raum-2017} and~\cite{dickson-neururer-2018} is that the latter employs the products of Eisenstein series of varying weight, while the weight of Eisenstein series is fixed in~\cite{raum-2017} and the level of Eisenstein series varies. We follow~\cite{raum-2017} in this paper. The argument in~\cite{dickson-neururer-2018} builds up on the vanishing of (twisted) period polynomials, which is a strategy that goes back to~\cite{rankin-1952,kohnen-zagier-1984}. This reasoning was replaced in~\cite{raum-2017} by an inspection of twists of one special $\rmL$-value, whose simultaneous vanishing can be controlled by means of results by Waldspurger and Kohnen-Zagier~\cite{waldspurger-1981,kohnen-zagier-1984}. If~$k + l = 2$, we need a result of Ono and Skinner~\cite{ono-skinner-1998}, which builds up on these and work of Friedberg-Hoffstein~\cite{friedberg-hoffstein-1995}. The following lemma provides the required relation between vector-valued Hecke operators and twists of modular forms, which yield twisted $\rmL$-values.
\begin{lemma}
\label{la:twist-by-dirichlet-character}
Let $M, N$ be positive integers, $f \in \rmM_k(\Ga_1(N))$, and~$\chi$ a Dirichlet character mod~$M$. Then we have
\begin{gather*}
  \Ind( f \otimes \chi )
\;=\;
  \phi\big(
  \rmT_{M^2}\, \Ind(f)
  \big)
\end{gather*}
for a homomorphism $\phi :\, \rmT_{M^2}\, \rho^\times_N \ra \rho^\times_{M^2 N}$ of arithmetic types.
\end{lemma}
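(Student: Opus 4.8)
The plan is to reduce the twist $f\otimes\chi$ to a fixed $\CC$-linear combination of slashes of $f$ by the upper-triangular, determinant-$M^2$ matrices $m_j:=\begin{psmatrix} M & j \\ 0 & M \end{psmatrix}$ for $0\le j<M$ — which are exactly the elements of $\Delta_{M^2}$ with lower-right entry $M$ — and then to feed this into the compatibility between $\Ind$ and the vector-valued Hecke operator $\rmT_{M^2}$ that already underlies Lemma~\ref{la:oldforms}.

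First I would record the elementary fact that $n\mapsto\chi(n)$ is an $M$-periodic function on $\ZZ$, hence admits a discrete Fourier expansion $\chi(n)=\sum_{j=0}^{M-1}c_j\,e^{2\pi i jn/M}$ with constants $c_j\in\CC$ (up to a factor of $M$ these are Gauss sums attached to $\chi$; importantly $\chi$ need not be primitive). Inserting this into the $q$-expansion of $f$ gives
\begin{gather*}
  (f\otimes\chi)(\tau)
\;=\;
  \sum_{j=0}^{M-1}c_j\,f\!\big(\tau+\tfrac{j}{M}\big)
\;=\;
  M^{k}\sum_{j=0}^{M-1}c_j\,\big(f\big|_k m_j\big)(\tau)
\tx{,}
\end{gather*}
since $m_j$ acts on $\HS$ by $\tau\mapsto\tau+\tfrac jM$ with automorphy factor $M$. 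In particular $f\otimes\chi$ lies in $\rmM_k(\Ga_1(M^2N))$, and hence $\Ind(f\otimes\chi)\in\rmM_k(\rho^\times_{M^2N})$.

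Next I would apply, for each $m_j$ separately, the same machinery as in the proof of Lemma~\ref{la:oldforms}: Proposition~2.17 of~\cite{raum-2017} together with the basic properties of vector-valued Hecke operators produces, for every $j$, a homomorphism of arithmetic types $\phi_j\colon\rmT_{M^2}\,\rho^\times_N\ra\rho^\times_{M^2N}$, depending only on $N$, $M$, and $j$, with $\Ind\big(f\big|_k m_j\big)=\phi_j\big(\rmT_{M^2}\Ind(f)\big)$. Concretely, writing $m_j\delta=\delta_1 m'$ with $\delta_1\in\SL{2}(\ZZ)$ and $m'\in\Delta_{M^2}$ (Hermite normal form) for $\delta\in\SL{2}(\ZZ)$, the $\delta$-slash of $\Ind(f|_k m_j)$ is read off from the component of $\rmT_{M^2}\Ind(f)$ indexed by $m'$ and by the $\Ga_1(N)$-class of $\delta_1$, and $\phi_j$ is the $\SL{2}(\ZZ)$-equivariant linear map performing this readout. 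Setting
\begin{gather*}
  \phi
\;:=\;
  M^{k}\sum_{j=0}^{M-1} c_j\,\phi_j
\;\colon\;
  \rmT_{M^2}\,\rho^\times_N
\lra
  \rho^\times_{M^2N}
\tx{,}
\end{gather*}
which is again a homomorphism of arithmetic types, linearity of $\Ind$ and of $\rmT_{M^2}$ yields
\begin{align*}
  \phi\big(\rmT_{M^2}\Ind(f)\big)
&\;=\;
  M^{k}\sum_{j=0}^{M-1} c_j\,\Ind\big(f\big|_k m_j\big)
\;=\;
  \Ind\Big( M^{k}\sum_{j=0}^{M-1} c_j\,f\big|_k m_j \Big)
\\
&\;=\;
  \Ind(f\otimes\chi)
\tx{,}
\end{align*}
as claimed.

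I expect the only genuine work to be in that middle step: verifying that the generality of Proposition~2.17 of~\cite{raum-2017} — phrased there around the oldform matrices $\diag(M,1)$ — indeed covers the unipotent-type representatives $m_j$. This amounts to re-running the coset bookkeeping $m_j\delta=\delta_1 m'$ and checking both that the resulting component-extraction map is $\SL{2}(\ZZ)$-equivariant (so that it is a homomorphism of arithmetic types, not merely a linear map) and that it takes values in $\rho^\times_{M^2N}$ (using that the level of $f|_k m_j$ divides $M^2N$). Everything else — the Fourier expansion of $\chi$, the reduction to the matrices $m_j$, and the passage from the $\phi_j$ to $\phi$ — is formal.
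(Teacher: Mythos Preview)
Your argument is correct, but it takes a more explicit route than the paper. The paper's proof is a one-line citation: the lemma is a special case of Proposition~2.19 of~\cite{raum-2017}, which directly supplies a twist map~$\pi_{\mathrm{twist}}$; after decomposing $\rho^\times_N$ and $\rho^\times_{M^2 N}$ according to~\eqref{eq:ind_Ga1_decomposition}, the desired~$\phi$ is assembled from the pieces of~$\pi_{\mathrm{twist}}$. You instead unpack the twist by hand---Fourier-expanding~$\chi$ to express $f\otimes\chi$ as a fixed linear combination of $f|_k m_j$ with $m_j\in\Delta_{M^2}$---and then invoke (a mild extension of) Proposition~2.17 of~\cite{raum-2017} for each~$m_j$, rather than the purpose-built Proposition~2.19. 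What you gain is that the construction of~$\phi$ becomes completely transparent and the dependence on~$\chi$ is visible in the Gauss-sum coefficients~$c_j$; what you lose is brevity, and you correctly flag that the nontrivial step is checking that the coset bookkeeping $m_j\delta=\delta_1 m'$ really yields an $\SL{2}(\ZZ)$-equivariant map into~$\rho^\times_{M^2 N}$ for each~$j$ (your verification that $f|_k m_j\in\rmM_k(\Ga_1(M^2N))$ is the main ingredient here). In effect you are reproving the relevant portion of Proposition~2.19 from the more primitive Proposition~2.17.
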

\begin{proof}
This is a special case of Proposition~2.19 of~\cite{raum-2017}. Specifically, the map~$\phi$ arises from~$\pi_{\mathrm{twist}}$ in that proposition after decomposing $\rho^\times_N$ and $\rho^\times_{M^2 N}$ as in~\eqref{eq:ind_Ga1_decomposition}.
\end{proof}

\begin{theorem}
\label{thm:cusp-forms-in-eisenstein-product}
Let~$k,l \ge 1$ and $\rho$ be a congruence type of level~$N$. Then we have
\begin{gather}
\label{eq:thm:cusp-forms-in-eisenstein-product}
  \rmS_{k+l}(\rho)
\;\subseteq\;
  \big( \cE_k(N_0) \otimes \cE_l(N_0) \otimes \rho \big)^{\SL{2}(\ZZ)}
\end{gather}
for~$N_0$ chosen as follows in terms of the Sturm bound in~\eqref{eq:def:sturm-bound}, if $\rho$ is generated by its $T$-fixed vectors:
\begin{alignat*}{2}
  N_0
&\;=\;
  N B(k+l, N)
\tx{,}\quad
&&
  \tx{if $k \ne l$, $k$ even;}
\\
  N_0
&\;=\;
  16 N B(k+l, 16 N))
\tx{,}\quad
&&
  \tx{if $k \ne l$, $k$ odd;}
\\
  N_0
&\;=\;
  N (16 B(k+\tfrac{1}{2}, N))^4 B\big(k+l, (16 B(k+\tfrac{1}{2}, N))^4 \big)
\tx{,}\quad
&&
  \tx{if $k = l \ge 2$.}
\end{alignat*}
For general $\rho$, the above bounds for~$N_0$ are multiplied by an additional factor~$N$:
\begin{alignat*}{2}
  N_0
&\;=\;
  N^2 B(k+l, N)
\tx{,}\quad
&&
  \tx{if $k \ne l$, $k$ even;}
\\
  N_0
&\;=\;
  16 N^2 B(k+l, 16 N))
\tx{,}\quad
&&
  \tx{if $k \ne l$, $k$ odd;}
\\
  N_0
&\;=\;
  N^2 (16 B(k+\tfrac{1}{2}, N))^4 B\big(k+l, (16 B(k+\tfrac{1}{2}, N))^4 \big)
\tx{,}\quad
&&
  \tx{if $k = l \ge 2$.}
\end{alignat*}
If~$k = l = 1$, there exists some~$N_0$ such that~\eqref{eq:thm:cusp-forms-in-eisenstein-product} holds.
\end{theorem}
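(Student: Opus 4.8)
The plan is to reduce the vector-valued assertion to a statement about classical elliptic modular forms, prove the latter by pairing cusp forms against products of Eisenstein series in the spirit of Rankin and of Lemma~\ref{la:petersson-scalar-product-against-eisenstein-product}, and then keep track of how the levels grow along the way. By Lemma~\ref{la:hecke-operators-forcing-T-fixed-vectors} together with the Hecke-stability of $\cE_k$ in Lemma~\ref{la:eisenstein-series-hecke-stable}, it suffices to treat the case that $\rho$ is generated by its $T$-fixed vectors; the extra application of $\rmT_N$ that is needed for general $\rho$ accounts for the additional factor~$N$ in the second list of bounds. Decomposing $\rho$ into irreducibles and invoking Lemma~\ref{la:T-fixed-vectors-yield-embedding}, we may then assume $\rho \hookrightarrow \rho^\times_N$, so that \eqref{eq:ind_Ga1_decomposition} and the induction isomorphism of Section~\ref{ssec:vector-valued-to-classical} reduce everything to showing $\rmS_{k+l}(\chi) \subseteq (\cE_k(N_0)_\infty \cdot \cE_l(N_0)_\infty \otimes \chi)^{\Ga_0(N_0)}$ for every Dirichlet character $\chi$ modulo~$N$. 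Finally, Atkin--Lehner theory reduces this to a single newform $f$ of weight $k+l$ and conductor dividing~$N$: membership in a product space propagates from $f$ to the oldforms $f(d\,\cdot\,)$ via Lemma~\ref{la:oldforms} and Lemma~\ref{la:eisenstein-series-hecke-stable}.

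\textbf{The pairing argument.} Let $P_{N_0}$ denote the span of products $g_1 g_2$ with $g_1 \in \cE_k(N_0)_\infty$ and $g_2 \in \cE_l(N_0)_\infty$ transforming compatibly with $\chi$. By Lemma~\ref{la:eisenstein-series-hecke-stable} this space is stable under the vector-valued Hecke operators up to a controlled increase of~$N_0$, so after enlarging $N_0$ by the Sturm bound $B(k+l,\,\cdot\,)$ it becomes a genuine Hecke submodule of $\rmM_{k+l}$. For the newform $f$ it then suffices to show $f \not\perp P_{N_0}$: using $\rmT_M$-translates and multiplicity one for newforms, the cuspidal projection of $P_{N_0}$, being Hecke-stable, contains the entire Hecke orbit of $f$, hence $f$; and the Eisenstein series one must subtract to pass from this projection back to $f$ itself is produced by hand, writing an Eisenstein series of weight $k+l \ge 2$ as a product of two Eisenstein series of slightly larger level (the weight-$2$ case through the $\xi_2$-operator appearing in the proof of Lemma~\ref{la:eisenstein-series-hecke-stable}). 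By Lemma~\ref{la:petersson-scalar-product-against-eisenstein-product}, $f \not\perp P_{N_0}$ holds as soon as there is a primitive character $\psi$ with $\psi(-1) = (-1)^k$---with $\psi \ne \bbone$ if $k = 2$ and $\psi \ne \chi$ if $l \le 2$---such that $\rmL(f^\rmc, k+l-1)\,\rmL(f^\rmc \otimes \psi, l) \ne 0$; the $\rmT_M$-translates used above are exactly what force the level of the product space up to a multiple of the Sturm bound, explaining its appearance in~$N_0$.

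\textbf{Finding a good twist.} If $k \ne l$, both $\rmL$-factors are automatically nonzero. Indeed $k+l-1 \ge \tfrac{k+l+1}{2}$ (with equality only when $k+l=3$), so $\rmL(f^\rmc, k+l-1) \ne 0$ by the nonvanishing of $\rmL(f^\rmc,s)$ on the edge of its region of absolute convergence; and since $k \ne l$ the point $l$ lies outside the open critical strip of $\rmL(f^\rmc \otimes \psi,\,\cdot\,)$, so the functional equation reduces $\rmL(f^\rmc \otimes \psi, l)$ to a value covered by the same nonvanishing. Thus any $\psi$ of the prescribed parity works, and taking one of minimal conductor---conductor $1$ in the generic even case and a bounded conductor (at most $16$) otherwise---yields the explicit $N_0$. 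If $k = l \ge 2$, then $2k-1$ lies in the region of absolute convergence, so again $\rmL(f^\rmc, 2k-1) \ne 0$, but $\rmL(f^\rmc \otimes \psi, k)$ is now a central value; taking $\psi$ quadratic and invoking Waldspurger's theorem and the Kohnen--Zagier formula, these central values are encoded by the Fourier coefficients of a form of weight $k + \tfrac12$, one of which, of fundamental index bounded by $16\,B(k+\tfrac12, N)$, must be nonzero---after the corresponding quadratic twist and a second application of the Sturm bound this gives the stated $N_0$.

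\textbf{The case $k = l = 1$ and the main obstacle.} When $k = l = 1$ both $\rmL(f^\rmc, 1)$ and $\rmL(f^\rmc \otimes \psi, 1)$ are central values and can vanish simultaneously, so the previous step collapses. The remedy is a double twist: by the theorem of Ono--Skinner choose a quadratic character $\eta$ with $\rmL(f^\rmc \otimes \eta, 1) \ne 0$, so that $f \otimes \eta$ is in the situation $\rmL(\,\cdot\,,1) \ne 0$; a further Ono--Skinner twist $\psi$ with $\rmL(f^\rmc \otimes \eta \otimes \psi, 1) \ne 0$ then places $f \otimes \eta$ inside a product space, and one recovers $f$ itself by applying Lemma~\ref{la:twist-by-dirichlet-character} to $f \otimes \eta$, together with an inclusion--exclusion over the primes dividing the conductor of $\eta$ and the Hecke-stability of product spaces. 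Since the Ono--Skinner input is ineffective, only the existence of $N_0$ is obtained. The main obstacle throughout is not conceptual but bookkeeping: combining the conductor of $f$, the conductors of the auxiliary twists, the Sturm bounds, and the repeated applications of $\rmT_M$ along the chain of reductions into the precise values of $N_0$ asserted, and---when $k = l$---dovetailing the half-integral-weight nonvanishing statement with that bookkeeping, which is the most delicate point.
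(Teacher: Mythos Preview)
Your overall strategy matches the paper's proof closely: the same reductions (to $\rho$ generated by $T$-fixed vectors via Lemma~\ref{la:hecke-operators-forcing-T-fixed-vectors}, to $\rho_\chi$ via Lemma~\ref{la:T-fixed-vectors-yield-embedding} and~\eqref{eq:ind_Ga1_decomposition}, to newforms via Lemma~\ref{la:oldforms}), the same pairing via Lemma~\ref{la:petersson-scalar-product-against-eisenstein-product}, and the same case split for nonvanishing of the two $\rmL$-values (edge-of-convergence arguments for $k\ne l$, Waldspurger/Kohnen--Zagier for $k=l\ge 2$, a double Ono--Skinner twist followed by an untwist for $k=l=1$).

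There is, however, one step that as written is a genuine gap. You pass to the \emph{cuspidal projection} of the product space $P_{N_0}$, argue that it contains~$f$, and then propose to ``produce by hand'' the Eisenstein part you must subtract, by writing a weight-$(k+l)$ Eisenstein series as a product of two Eisenstein series. That last step is exactly the content of Theorem~\ref{thm:eisenstein-series-in-eisenstein-product}, which is proved separately, carries its own level increase~$N_1$, and in fact \emph{uses} Theorem~\ref{thm:cusp-forms-in-eisenstein-product} in its proof; invoking it here would be circular and would spoil the stated~$N_0$. (The $\xi_2$-operator in the proof of Lemma~\ref{la:eisenstein-series-hecke-stable} handles Hecke stability in weight~$2$, not expressibility of Eisenstein series as products.) The paper avoids this detour entirely: once one has a single $g_\infty$ at level~$N'$ with $\langle g_\infty,f\rangle\ne 0$, the Sturm bound guarantees that the vectors $(\lambda_h(M))_{M\le B(k+l,N')}$ of Hecke eigenvalues of \emph{all} normalized eigenforms $h\in \rmM_{k+l}(\chi)$---cuspidal and Eisenstein alike---are linearly independent, so there exist scalars $c(M)$ with
\[
  f \;=\; \sum_{M\le B(k+l,N')} c(M)\,\rmT_M\, g_\infty
\]
on the nose, with no Eisenstein remainder. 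Then $\rmT_M$ maps $\cE_k(N')\cdot\cE_l(N')$ into $\cE_k(MN')\cdot\cE_l(MN')$ by Lemma~\ref{la:eisenstein-series-hecke-stable}, which places $f$ in the product space at level $N'\,B(k+l,N')$. Replace your cuspidal-projection paragraph by this direct Hecke projection and the rest of your outline goes through.
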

\begin{remark}
The factor of~$B(k+\frac{1}{2},N)^4$ in the case of~$k = l$ arises (twice) because we restrict in the proof to twists of $\rmL$-values by Kronecker characters associated with imaginary quadratic fields. Explicit non-vanishing bounds for central values of the families $\rmL(f \otimes \psi, s)$, $f$ a fixed newform and $\psi$ a Dirichlet character can be used to improve this bound.
\end{remark}
\begin{proof}
Suppose that the statement holds for all~$\rho$ that are generated by their~$T$-fixed vectors. Then the general case follows by applying Lemma~\ref{la:hecke-operators-forcing-T-fixed-vectors} in conjunction with Lemma~\ref{la:eisenstein-series-hecke-stable} in this paper and Theorem~2.8 of~\cite{raum-2017}. In particular, we can and will assume that $\rho$ is generated by its~$T$-fixed vectors. Specifically, it suffices to treat all subrepresentations of~$\rho^\times_N = \Ind_{\Ga_1(N)}\,\bbone$ by Lemma~\ref{la:T-fixed-vectors-yield-embedding}. Recall that we have $\rho^\times_N = \oplus_\chi \rho_\chi$, where $\chi$ runs through all Dirichlet characters mod~$N$. For this reason, we can further restrict to the subrepresentations of~$\rho_\chi = \Ind_{\Ga_0(N)}\,\chi$, where $\chi$ is an arbitrary Dirichlet characters mod~$N$.

Using induction on~$N$, we can use Lemma~\ref{la:oldforms} to obtain the inclusion
\begin{gather}
\label{eq:prop:cusp-forms-in-eisenstein-product:oldforms}
  \Ind\big( \rmS^\old_{k+l}(\chi) \big)
\subseteq
  \big( \cE_k(N_0) \otimes \cE_l(N_0) \otimes \rho \big)^{\SL{2}(\ZZ)}
\tx{.}
\end{gather}
Hence we can and will restrict to newforms in the remainder of the proof. Consider the subset (as opposed to subspace)~$\rmS^\new_{k+l}(\chi) \subseteq \rmS_{k+l}(\chi)$ of newforms. Since the right hand side is a vector space, the statement of the theorem follows, if we show that
\begin{gather}
\label{eq:prop:cusp-forms-in-eisenstein-product:newforms}
  \Ind\big( \rmS^\new_{k+l}(\chi) \big)
\subseteq
  \big( \cE_k(N_0) \otimes \cE_l(N_0) \otimes \rho \big)^{\SL{2}(\ZZ)}
\tx{.}
\end{gather}

We next inspect the Petersson scalar products of newforms for the Dirichlet character~$\chi$ and the component at infinite~$g_\infty$ of elements~$g$ of~$(\cE_k(N') \otimes \cE_l(N') \otimes \rho)^{\SL{2}(\ZZ)}$ for some positive integer~$N'$. By the isomorphisms~\eqref{eq:component-at-infty}, this is equivalent to choosing a classical modular forms $g_\infty \in (\cE_k(N') \otimes \cE_l(N') \otimes \chi)^{\Ga_0(N)}$. Consider an arbitrary newform~$f \in \rmS^\new_{k+l}(\chi)$ and assume that~$\langle g_\infty, f \rangle \ne 0$ for some~$g_\infty \in (\cE_k(N') \otimes \cE_l(N') \otimes \chi)^{\Ga_0(N)}$. Decomposing~$g_\infty$ as a sum of Hecke eigenforms, we find that there is a linear combination~$\sum c(M) \rmT_M$, $M$ running through a suitable range of positive integers, of classical Hecke operators~$\rmT_M$ such that
\begin{gather*}
  f
\;=\;
  g_\infty \Big|_{k+l} \sum c(M) \rmT_M
\tx{.}
\end{gather*}
By virtue of the Sturm bound for modular forms of weight~$k+l$ and level~$N'$, we can further assume that $c(M) = 0$ if $M$ is larger than~$B(k+l,N')$ in~\eqref{eq:def:sturm-bound}

We summarize that, given a positive integer~$N'$ divisible by~$N$ such that there is~$g_\infty \in (\cE_k(N') \otimes \cE_l(N') \otimes \chi)^{\Ga_0(N)}$ with~$\langle g_\infty,\, f \rangle \ne 0$, we have
\begin{gather*}
  \Ind(f)
\in
  \sum_{M=1}^{B(k+l,N')} \rmT_M
  \big( \cE_k(N') \otimes \cE_l(N') \otimes \rho \big)^{\SL{2}(\ZZ)}
\tx{.}
\end{gather*}
Proposition~2.5 of~\cite{raum-2017} and Lemma~\ref{la:eisenstein-series-hecke-stable} imply the inclusion
\begin{gather*}
  \Ind(f)
\in
  \big( \cE_k(N' B(k+l,N')) \otimes \cE_l(N' B(k+l,N')) \otimes \rho \big)^{\SL{2}(\ZZ)}
\tx{.}
\end{gather*}
To finish the proof, we have to show the existence of~$g_\infty$ for $N'$ suitable to match the statement of Theorem~\ref{thm:cusp-forms-in-eisenstein-product}. By symmetry of the assertion in~$k$ and~$l$, we can and will assume that $l \ge k$.

We next utilize Lemma~\ref{la:petersson-scalar-product-against-eisenstein-product}. Observe that the complex conjugate~$f^\rmc$ that appears in Lemma~\ref{la:petersson-scalar-product-against-eisenstein-product} lies in~$\rmS_{k+l}(\ov\chi)$. The special $\rmL$-values $\rmL(f^\rmc, s)$ and $\rmL(f^\rmc \otimes \psi, s)$ do not vanish if $s > 1 + (k+l) \slash 2$, since they can be expressed in terms of a convergent Euler product. If $s = 1 + (k+l) \slash 2$, they lie on the abscissa of convergence of~$\rmL(f^\rmc, s)$ and $\rmL(f^\rmc \otimes \psi, s)$, respectively. We deduce that they do not vanish from Lemma~5.9 of~\cite{iwaniec-kowalski-2004} in combination with Deligne's assertion of the Ramanujan Conjecture for holomorphic elliptic modular forms~\cite{deligne-1980}.

If $l \ge k + 1$, then both~$\rmL(f^\rmc, k+l-1)$ and~$\rmL(f^\rmc \otimes \psi, l)$ in Lemma~\ref{la:petersson-scalar-product-against-eisenstein-product} do not vanish by our observations in the preceding paragraph. If~$k$ is even, we can choose the trivial Dirichlet character for $\psi$. If~$k$ is odd, we can choose an odd Dirichlet character of modulus $16$ and regard~$f$ as a modular form of level~$16 N$. As a result, we obtain the desired element $g_\infty \in ( \cE_k(N') \otimes \cE_l(N') \otimes \chi )^{\Ga_0(N')}$ with~$N' = N$ if~$k$ is even and $N' = 16 N$ if~$k$ is odd. This finishes the proof if $l \ge k + 1$.

It remains to treat the setting of~$k = l$, in which case~$\rmL(f^\rmc \otimes \psi, l)$, and if $k = l = 1$ also $\rmL(f^\rmc, k+l-1)$, in Lemma~\ref{la:petersson-scalar-product-against-eisenstein-product} are central $\rmL$-values. By Waldspurger and Kohnen-Zagier~\cite{waldspurger-1981,kohnen-zagier-1984}, there is a nonzero modular form of weight $k + 1 \slash 2$ whose $|D|$-th Fourier coefficient, for a fundamental discriminant~$D < 0$, is a nonzero multiple of~$\rmL(f^\rmc \otimes \epsilon_D, l)^{1 \slash 2}$, where $\epsilon_D$ is the Kronecker character associated with~$D$. The Sturm bound for half-integral weight modular forms implies that $\rmL(f^\rmc \otimes \epsilon_D, l)^{1 \slash 2} \ne 0$ for some $D < B(k + \frac{1}{2},N)$.

Consider the case of $k = l \ge 2$ and fix some~$D$ as in the previous paragraph. As~$k + l - 1 \ge 1 + (k+l) \slash 2$, we have verified that both~$\rmL(f^\rmc, k+l-1)$ and~$\rmL(f^\rmc \otimes \epsilon_D\psi, k+l-1)$ do not vanish. Our choice of~$D$ guarantees that~$\rmL(f^\rmc \otimes \epsilon_D, l) \ne 0$ and therefore~$\rmL(f^\rmc \otimes \epsilon_D \psi^2, l) \ne 0$. We can apply Lemma~\ref{la:petersson-scalar-product-against-eisenstein-product} with $f \leadsto f \otimes \epsilon_D\psi$ for a suitable Dirichlet character~$\psi$ mod~$16$. This yields an element~$g_{D\,\infty}$ of~$\cE_k((16D)^2 N) \otimes \cE_l((16D)^2 N)$ such that $\langle g_{D\,\infty},\, f \otimes \epsilon_D \psi \rangle \ne 0$.
By the following computation, we can choose
\begin{gather*}
  g_\infty
=
  \Big( \pi_{\mathrm{adj}}\big( \rmT_{(16D)^2}\, \iota_{\mathrm{twist}}\big( \Ind(g_{D\,\infty}) \big) \big) \Big)_\infty
\in
  \cE_k((16D)^4 N) \otimes \cE_l((16D)^4 N)
\tx{.}
\end{gather*}
Indeed, using the maps~$\pi_{\mathrm{twist}}$ and~$\iota_{\mathrm{twist}}$ from Proposition~2.19 of~\cite{raum-2017} in conjunction with~$\pi_{\mathrm{adj}}$ from Proposition~2.10 of~\cite{raum-2017}, we find the relation
\begin{gather}
\label{eq:thm:cusp-forms-in-eisenstein-product:twist-adjugate-relation}
\begin{aligned}
&
  \big\langle g_{D\,\infty},\, f \otimes \epsilon_D\psi \big\rangle
=
  \big\langle \Ind(g_{D\,\infty}),\, \Ind(f \otimes \epsilon_D\psi) \big\rangle
\\
={}&
  \Big\langle \Ind(g_{D\,\infty}),\, \pi_{\mathrm{twist}}\, \big(\rmT_{(16D)^2}\, \Ind(f) \big) \Big\rangle
\\
={}&
  \Big\langle \iota_{\mathrm{twist}}\big( \Ind(g_{D\,\infty}) \big),\, \rmT_{(16D)^2}\, \Ind(f) \Big\rangle
\\
={}&
  \Big\langle \pi_{\mathrm{adj}}\big( \rmT_{(16D)^2}\, \iota_{\mathrm{twist}}\big( \Ind(g_{D\,\infty}) \big) \big),\, \Ind(f) \Big\rangle
\\
={}&
  \Big\langle \Big( \pi_{\mathrm{adj}}\big( \rmT_{(16D)^2}\, \iota_{\mathrm{twist}}\big( \Ind(g_{D\,\infty}) \big) \big) \Big)_\infty,\, f \Big\rangle
\tx{.}
\end{aligned}
\end{gather}

We are left with the case~$k = l = 1$. By Corollary~3 of Ono-Skinner~\cite{ono-skinner-1998}, strengthening results of Waldspurger and Kohnen-Zagier, there are infinitely many fundamental discriminants~$D < 0$ co-prime to the level~$N$ of~$f$ such that $L(f^\rmc \otimes \epsilon_D, 1) \ne 0$. We fix one such~$D$. Inspecting its Fourier expansion, we find that $f \otimes \epsilon_D$ coincides with a newform of level at most $N D^2$. This allows us to apply Ono-Skinner~\cite{ono-skinner-1998} a second time, and find another fundamental discriminant~$D' < 0$ such that $L(f^\rmc \otimes \epsilon_D \otimes \epsilon_{D'}, 1) \ne 0$. Since~$\epsilon_{D'}(-1) = -1 = (-1)^k$, we can set~$\psi = \epsilon_{D'}$ and invoke Lemma~\ref{la:petersson-scalar-product-against-eisenstein-product} with $f \leadsto f \otimes \epsilon_D\psi$. The calculation in~\eqref{eq:thm:cusp-forms-in-eisenstein-product:twist-adjugate-relation} extends, showing that there is~$g_\infty \in \cE_k((DD')^2 N) \otimes \cE_l((D D')^2 N)$ such that $\langle g_\infty, f \rangle \ne 0$. This concludes the proof.
\end{proof}

\section{Eisenstein series}

In preparation for the proof of this section's main theorem, we determine the space of cusp expansions of Eisenstein series.
\begin{lemma}
\label{la:eisenstein-lower-bound}
Fix an irreducible congruence type~$\rho$ with~$T$-fixed vectors and an integer~$k$ such that $\rho = \rho^{\parity(k)}$. Then we have the inclusions of\/ $\SL{2}(\ZZ)$-representations $\rho \hra \cE_k(\rho)$ if~$k \ge 3$. If $\rho \ne \bbone$ and~$k = 2$, we also have $\rho \hra \cE_2(\rho)$. Moreover, $\cE_1(N)$ is nonzero if $N \ge 3$.
\end{lemma}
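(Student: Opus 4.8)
The plan is to reduce all three statements to the nonvanishing of a space of vector-valued Eisenstein series, and then to produce such a series explicitly (or to quote a dimension formula). First I would make the reduction. Suppose $0 \ne f \in \rmE_k(\rho)$. Then $v \mapsto v \circ f$ is an $\SL{2}(\ZZ)$-equivariant map from $V(\rho)^\vee$, with the contragredient action, to $\cE_k(\rho)$; its kernel $\{v : v \circ f \equiv 0\}$ is a proper subrepresentation because $f \ne 0$, hence is zero by irreducibility of $\rho$. As congruence types are unitarizable, $\rho$ is self-dual, and this gives the desired embedding $\rho \hra \cE_k(\rho)$. It therefore suffices to prove that $\rmE_k(\rho) \ne 0$ for $k \ge 3$, that $\rmE_2(\rho) \ne 0$ when $\rho \ne \bbone$, and that $\cE_1(N) \ne 0$ for $N \ge 3$.

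For $k \ge 3$ I would build the Eisenstein series from a $T$-fixed vector. Pick $0 \ne v_0 \in V(\rho)$ with $\rho(T) v_0 = v_0$; the hypothesis $\rho = \rho^{\parity(k)}$ gives $\rho(-I) = (-1)^k \id$, so the constant function $v_0$ is fixed by the weight-$k$ slash action of the group $\langle T, -I \rangle$, and
\begin{gather*}
  E
\;:=\;
  \sum_{\ga \in \langle T, -I \rangle \backslash \SL{2}(\ZZ)} v_0 \big|_{k,\rho}\, \ga
\end{gather*}
is well defined. Since $\rho$ is unitarizable the terms $\rho(\ga^{-1}) v_0$ have bounded norm, so the sum converges absolutely and locally uniformly for $k \ge 3$ by comparison with $\sum_\ga |c_\ga \tau + d_\ga|^{-k}$; thus $E \in \rmM_k(\rho)$, and as the trivial coset is the only one with $c_\ga = 0$, one has $E(\tau) \to v_0 \ne 0$ as $\tau \to i\infty$. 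Hence $E$ is not cuspidal and $\rmE_k(\rho) \ne 0$. For $k = 2$ the same series converges only conditionally, so I would use Hecke's trick: insert a factor $|c_\ga \tau + d_\ga|^{-2s}$, continue analytically to $s = 0$, and obtain a real-analytic $\wht E$ of weight $2$ and type $\rho$. The obstruction to holomorphy of $\wht E$ is $\xi_2 \wht E$ (with the $\xi_2$-operator of~\cite{bruinier-funke-2004}), a weight-$0$ modular form for $\rho^\vee$; since $\rho$, hence $\rho^\vee$, is irreducible and---because $\rho \ne \bbone$---nontrivial, $\rmM_0(\rho^\vee) = 0$, so $\wht E \in \rmM_2(\rho)$ is holomorphic. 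Its constant term at $\infty$ is $v_0$ up to the contribution of the analytic continuation, and that contribution is precisely the summand killed by $\xi_2$, hence vanishes; so $\wht E \notin \rmS_2(\rho)$ and $\rmE_2(\rho) \ne 0$.

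For $k = 1$ it suffices, by the reduction and the identity $\cE_1(N) = \cE_1(\rho^\times_N)$, to show $\rmE_1(\rho^\times_N) \ne 0$, which under the induction isomorphism $\rmM_1(\Ga_1(N)) \cong \rmM_1(\rho^\times_N)$ of Section~\ref{ssec:vector-valued-to-classical}---an isomorphism respecting Eisenstein subspaces---is $\rmE_1(\Ga_1(N)) \ne 0$. Since $N \ge 3$, we have $-1 \ne 1$ in $(\ZZ/N\ZZ)^\times$, so there is a Dirichlet character $\chi$ mod $N$ with $\chi(-1) = -1$, and the classical weight-$1$ Eisenstein series attached to the pair $(\bbone, \chi)$ is a nonzero non-cuspidal element of $\rmM_1(\Ga_1(N))$; this yields $\rmE_1(\Ga_1(N)) \ne 0$. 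The step I expect to be the main obstacle is the $k = 2$ case: one must check both that the $\xi_2$-obstruction of $\wht E$, which a priori sits in $\rmM_0(\rho^\vee)$, genuinely vanishes here, and that the analytic continuation does not cancel the constant term, so that $\wht E \ne 0$. An alternative that sidesteps the analytic continuation is the vector-valued Eisenstein dimension formula, $\dim \rmE_k(\rho) = \dim \rho^T$ for $k \ge 3$ and $\dim \rmE_2(\rho) = \dim \rho^T - \dim V(\rho)^{\SL{2}(\ZZ)}$, both positive under our hypotheses---but this presupposes that formula.
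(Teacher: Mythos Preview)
Your argument follows essentially the same route as the paper: build $E_{k,\rho,v_0}$ by averaging a $T$-fixed vector (absolute convergence for $k \ge 3$), use Hecke's trick plus the $\xi_2$-operator for $k = 2$ (the paper phrases the target of $\xi_2$ as type $\ov\rho$ rather than $\rho^\vee$, but these coincide for unitarizable $\rho$), and for $k = 1$ invoke the classical weight-$1$ Eisenstein series attached to an odd Dirichlet character (the paper simply cites Miyake). Your reduction step via $v \mapsto v \circ f$ and irreducibility is exactly the content of the paper's surjection $\rho^T \otimes \rho^\vee \thra \cE_k(\rho)$.

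There is one genuine slip: the claim ``as congruence types are unitarizable, $\rho$ is self-dual'' is false. Unitarizability yields $\rho^\vee \cong \ov\rho$, not $\rho^\vee \cong \rho$; these differ whenever the character of $\rho$ takes nonreal values, and irreducible congruence types with nonreal character do occur. What your argument actually establishes---and what the paper's proof also establishes---is the embedding $\rho^\vee \hra \cE_k(\rho)$. The lemma as stated writes $\rho$, but if you look at how it is invoked in the proof of Theorem~\ref{thm:eisenstein-series-in-eisenstein-product} (the map $\iota_{\rho' \otimes \sigma} :\, \rho^{\prime\,\vee} \otimes \sigma^\vee \hra \cE_k(\rho' \otimes \sigma)$), it is precisely the dual that appears. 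So drop the self-duality sentence; the rest of your proof then delivers $\rho^\vee \hra \cE_k(\rho)$, which is what is actually used downstream.
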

\begin{proof}
The condition~$\rho = \rho^{\parity(k)}$ ensures that the center of~$\SL{2}(\ZZ)$ acts trivially via the slash action~$|_{k,\rho}$. Consider the case~$k > 2$.  Since~$\rho$ is unitarizable, the associated vector-valued Eisenstein series of weight~$k > 2$ converge absolutely and locally uniformly. We obtain a nonzero map
\begin{gather}
\label{eq:la:eisenstein-lower-bound}
  \rho^T \otimes \rho^\vee
\lthra
  \cE_k(\rho)
\tx{,}\quad
  v \otimes w
\lmto
  w \circ E_{k,\rho,v}
\tx{.}
\end{gather}

If $k = 2$, the usual procedure of analytic continuation yields a real-analytic Eisenstein series~$E_{2,\rho,v}$ whose image under the $\xi_2$-operator of~\cite{bruinier-funke-2004} is modular of arithmetic type~$\ov{\rho}$. In particular, if~$\rho \ne \bbone$, then~$\xi_2$ annihilates~$E_{2,\rho,v}$. In other words, $E_{2,\rho,v}$ is holomorphic, and we obtain a nonzero map as in~\eqref{eq:la:eisenstein-lower-bound}.

In the case of~$k = 1$, the nonvanishing of~$\cE_1(N)$ follows from a standard computation following Section~6 of~\cite{miyake-1989}.
\end{proof}

Complementing Theorem~\ref{thm:cusp-forms-in-eisenstein-product}, which deals with cusp forms that are expressed as products of Eisenstein series, the next theorem is concerned with Eisenstein series. We achieve complete results except if~$k = l = 1$ (see Remark~\ref{rm:thm:eisenstein-series-in-eisenstein-product}).
\begin{theorem}
\label{thm:eisenstein-series-in-eisenstein-product}
Fix intergers~$k \ge 2$, $l \ge 1$, and a congruence type~$\rho$ of level~$N$. Let $N_0$ be as in Theorem~\ref{thm:cusp-forms-in-eisenstein-product}, then we have
\begin{gather}
\label{eq:thm:eisenstein-series-in-eisenstein-product}
  \rmE_{k+l}(\rho)
\;\subseteq\;
  \big(
  \cE_k(\lcm(N_0, N N_1)) \otimes \cE_l(\lcm(N_0, N_1)) \otimes \rho
  \big)^{\SL{2}(\ZZ)}
\tx{.}
\end{gather}
for $N_1$ co-prime to~$N$ chosen such that
\begin{alignat*}{2}
  N_1
&\;\ge\;
  1
\tx{,}\quad
&&
  \tx{if $k > 2$ and $l$ even;}
\\
  N_1
&\;\ge\;
  2
\tx{,}\quad
&&
  \tx{if $k > 2$ and $l > 1$ odd;}
\\
  N_1
&\;\ge\;
  2
\tx{,}\quad
&&
  \tx{if $k = 2$ and $l > 1$;}
\\
  N_1
&\;\ge\;
  3
\tx{,}\quad
&&
  \tx{if $l = 1$;}
\end{alignat*}
\end{theorem}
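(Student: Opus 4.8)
The plan is to reduce the statement to one about constant terms of Eisenstein series, invoke Theorem~\ref{thm:cusp-forms-in-eisenstein-product} to absorb the cuspidal part, and then argue by a direct multiplication of constant terms together with the description of Eisenstein spaces as $\SL{2}(\ZZ)$-representations. First I would carry out the usual reductions. Decomposing the congruence type $\rho$ into irreducibles, both sides of~\eqref{eq:thm:eisenstein-series-in-eisenstein-product} split accordingly; an irreducible summand contributes nothing unless it equals its own $\parity(k+l)$-part and has $T$-fixed vectors, for otherwise every modular form of that type vanishes to positive order at every cusp and $\rmE_{k+l}$ of it is zero. By Lemma~\ref{la:T-fixed-vectors-yield-embedding} together with unitarizability of congruence types, such a summand is a direct summand of $\rho^\times_N = \bigoplus_\chi \rho_\chi$, so it suffices to treat $\rho = \rho^\times_N$, which when convenient one decomposes over Dirichlet characters and, via the induction isomorphisms of Section~\ref{ssec:vector-valued-to-classical}, translates into a statement about the classical spaces $\rmE_{k+l}(\chi)$.

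Next I would peel off the cusp forms. Since $N \mid N_0$ by the formulas in Theorem~\ref{thm:cusp-forms-in-eisenstein-product}, both levels $\lcm(N_0, N N_1)$ and $\lcm(N_0, N_1)$ in~\eqref{eq:thm:eisenstein-series-in-eisenstein-product} are multiples of $N_0$, so by the monotonicity $\cE_m(M) \subseteq \cE_m(M')$ for $M \mid M'$ the right-hand side of~\eqref{eq:thm:eisenstein-series-in-eisenstein-product} contains $(\cE_k(N_0) \otimes \cE_l(N_0) \otimes \rho)^{\SL{2}(\ZZ)}$, which contains $\rmS_{k+l}(\rho)$ by Theorem~\ref{thm:cusp-forms-in-eisenstein-product}. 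As $\rmM_{k+l}(\rho) = \rmE_{k+l}(\rho) \oplus \rmS_{k+l}(\rho)$, it then suffices to produce, for each $\phi \in \rmE_{k+l}(\rho)$, an element of the right-hand side with the same constant term in its $q$-expansion at $\infty$: the difference has vanishing constant term at $\infty$, hence at every cusp by $\SL{2}(\ZZ)$-equivariance, hence is a cusp form. Because $k + l \geq 3$, the constant-term map identifies $\rmE_{k+l}(\rho)$ with $V(\rho)^T = V(\rho)^{T\,\parity(k+l)}$ (every $T$-fixed vector of the right parity is the constant term of a convergent weight-$(k+l)$ Eisenstein series), so the task becomes to show that the constant terms at $\infty$ of the elements of the right-hand side of~\eqref{eq:thm:eisenstein-series-in-eisenstein-product} already exhaust $V(\rho)^T$.

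For this I would use that the constant term at $\infty$ of a termwise product of a weight-$k$ Eisenstein series and a weight-$l$ Eisenstein series is the tensor product of the constant terms of the two factors, combined with the description of which constant terms occur at each level: for weight $m \geq 3$ the constant terms of Eisenstein series of level $M$ fill the whole $\parity(m)$-part of $(\rho^\times_M)^T$; for weight $2$ they fill the even-parity part away from the trivial constituent, by the $\xi_2$-analysis of Lemmas~\ref{la:eisenstein-series-hecke-stable} and~\ref{la:eisenstein-lower-bound}; and for weight $1$ one obtains only a distinguished subspace of the odd-parity part of $(\rho^\times_M)^T$, which nonetheless grows with $M$ and is nonzero already for $M \geq 3$ by Lemma~\ref{la:eisenstein-lower-bound}. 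In the generic range — say $k \geq 3$ and $l$ even with $l \geq 4$ — one multiplies the span of all weight-$k$ Eisenstein series of level $N$, whose constant terms fill the entire even-parity boundary space, against the level-one Eisenstein series $E_l$, whose constant term is non-zero at every cusp; the products then span the full boundary space in weight $k+l$, which is why $N_1 = 1$ suffices there. When $l$ is odd, or $l = 2$, or $k = 2$, a level-one Eisenstein series with nowhere-vanishing constant terms is not available, and one has to enlarge the level by the coprime factor $N_1$ (taking $N_1 \geq 2$ to have an odd Dirichlet character of conductor dividing $N_1$ at one's disposal, or $N_1 \geq 3$ when $l = 1$ to have a non-trivial weight-one Eisenstein series), and then verify — keeping track of levels through the inclusions of Lemma~\ref{la:eisenstein-series-hecke-stable} and, as in the proof of Theorem~\ref{thm:cusp-forms-in-eisenstein-product}, the vector-valued Hecke operators and twists of~\cite{raum-2017} — that the products of these necessarily smaller constant-term spaces still surject onto $V(\rho)^T$.

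I expect the main obstacle to be exactly these small-weight factors $l \in \{1,2\}$ and $k = 2$: there the constant-term map on $\rmE_m$ is no longer surjective onto the naive boundary space, so the representation-theoretic bookkeeping — showing that a proper subspace of weight-$k$ boundary data times a proper subspace of weight-$l$ boundary data still spans all of the weight-$(k+l)$ boundary data after passing to level $N_1$ — has to be done carefully, and this is also the reason the theorem stops short of $k = l = 1$, where, as noted in Remark~\ref{rm:thm:eisenstein-series-in-eisenstein-product}, the relevant tensor product of Weil representations is not understood.
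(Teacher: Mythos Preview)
Your reductions are correct and match the paper: decompose into irreducibles with $T$-fixed vectors, embed into $\rho^\times_N$, peel off cusp forms via Theorem~\ref{thm:cusp-forms-in-eisenstein-product}, and reduce to realizing every vector of $\rho^{\times\,T\,\parity(k+l)}_N$ as a constant term of an element of the right-hand side. Your generic case ($k\ge3$, $l$ even, $N_1=1$) is also essentially the paper's argument specialized to $\sigma=\bbone$: multiply a weight-$k$ Eisenstein series for an irreducible $\rho'\subseteq\rho^\times_N$ against the scalar level-one $E_l$.

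The gap is in the non-generic cases, where you propose to proceed ``as in the proof of Theorem~\ref{thm:cusp-forms-in-eisenstein-product}, [with] the vector-valued Hecke operators and twists of~\cite{raum-2017}''. The paper uses no Hecke operators or twists here; the mechanism is different and is exactly the step your outline is missing. Coprimality of $N$ and $N_1$ yields a tensor decomposition $\rho^\times_{N N_1}\cong\rho^\times_N\otimes\rho^\times_{N_1}$. One fixes an irreducible $\sigma\hookrightarrow\cE_l(N_1)$ with a $T$-fixed vector $w_1$ (taking $\sigma\ne\bbone$ when $k=2$, and $\sigma$ of odd parity when $l$ is odd), and then forms the weight-$k$ Eisenstein series $E_{k,\rho'\otimes\sigma,v_1\otimes w_1}$ at level $N N_1$, \emph{not} at level $N$. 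Its $\sigma^\vee$-components contract against the $\sigma$-valued components of a weight-$l$ Eisenstein series whose constant term is $w_1^\vee$, and the resulting $\SL{2}(\ZZ)$-invariant has constant term exactly $v_1\in V(\rho')$. Without this contraction there is no reason the product of a constant term in $(\rho^\times_N)^{T\,\parity(k)}$ with one in $(\rho^\times_{N_1})^{T\,\parity(l)}$ should land back in $(\rho^\times_N)^{T\,\parity(k+l)}$; your phrase ``proper subspace times proper subspace still surjects'' is precisely the assertion that requires this idea, and Hecke operators do not supply it. (Incidentally, $N_1=2$ carries no odd Dirichlet character, so the parenthetical justification you give for the bound $N_1\ge2$ in the odd-$l$ case is not right on its own terms either.)
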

\begin{remark}
\label{rm:thm:eisenstein-series-in-eisenstein-product}
By symmetry, an analogous statement for $k = 1$ and $l \ge 2$ can be deduced from Theorem~\ref{thm:eisenstein-series-in-eisenstein-product}. The case $k = l = 1$, however, is not included. Computer-based experiments for small~$N$ suggest that Theorem~\ref{thm:eisenstein-series-in-eisenstein-product} should also hold in this case, if $N_1$ is sufficiently large.
\end{remark}
\begin{proof}
Given a space~$\cF$ of possibly vector-valued functions that are representable as Puiseux series, e.g., $\cF = \cE_k(N)$ or~$\cF = E_k(\rho)$, denote by~$c(\cF,0)$ the space of its constant coefficients. We will show that
\begin{gather}
\label{eq:thm:eisenstein-series-in-eisenstein-product:constant-term}
  c\big( \rmE_{k+l}(\rho),\, 0 \big)
\;=\;
  c\Big(
  \big(
  \cE_k(\lcm(N_0, N N_1)) \otimes \cE_l(\lcm(N_0, N_1)) \otimes \rho
  \big)^{\SL{2}(\ZZ)},\,
  0
  \Big)
\;\subseteq\;
  V(\rho)
\tx{.}
\end{gather}
Suppose that~\eqref{eq:thm:eisenstein-series-in-eisenstein-product:constant-term} is true, and let $f \in \rmE_{k+l}(\rho)$. Then there exists an element
\begin{gather*}
  g
\;\in\;
  \big(
  \cE_k(\lcm(N_0, N N_1)) \otimes \cE_l(\lcm(N_0, N_1)) \otimes \rho
  \big)^{\SL{2}(\ZZ)}
\end{gather*}
such that the constant term of $f - g$ vanishes. In other words, the difference~$f - g$ is a cusp form. Therefore, by Theorem~\ref{thm:cusp-forms-in-eisenstein-product} and our choice of~$N_0$, we can conclude that~$f$ is contained in the right hand side of~\eqref{eq:thm:eisenstein-series-in-eisenstein-product}. Thus we finish the proof once we have established~\eqref{eq:thm:eisenstein-series-in-eisenstein-product:constant-term}.

Observe that we have
\begin{gather*}
  \rmM_{k+l}(\rho)
\supseteq
  \big(
  \cE_k(\lcm(N_0, N N_1)) \otimes \cE_l(\lcm(N_0, N_1)) \otimes \rho
  \big)^{\SL{2}(\ZZ)}
\tx{,}
\end{gather*}
so that it follows in a straightforward way that the right hand side of~\eqref{eq:thm:eisenstein-series-in-eisenstein-product:constant-term} is contained in the left hand side. It remains to show that its left hand side is contained in the right hand side.

Equality~\eqref{eq:thm:eisenstein-series-in-eisenstein-product:constant-term} follows if it holds for all irreducibe~$\rho$. If $\rho$ has no~$T$-fixed vectors, the space of Eisenstein series~$\rmE_{k+l}(\rho)$ is zero by definition, and for this reason~\eqref{eq:thm:eisenstein-series-in-eisenstein-product:constant-term} holds. If~$\rho$ is irreducible and has~$T$-fixed vectors, then it embeds into~$\rho^\times_N$ by Lemma~\ref{la:T-fixed-vectors-yield-embedding}. We conclude that for the remainder of the proof, we may assume that~$\rho = \rho^\times_N$.

Given positive integers~$N' \isdiv N''$, we have $\cE_k(N') \subseteq \cE_k(N'')$ and $\cE_l(N') \subseteq \cE_l(N'')$. To establish~\eqref{eq:thm:eisenstein-series-in-eisenstein-product:constant-term}, it therefore suffices to show that
\begin{gather}
\label{eq:thm:eisenstein-series-in-eisenstein-product:constant-term-refined}
  c\big( \rmE_{k+l}(\rho^\times_N),\, 0 \big)
\subseteq
  c\Big(
  \big(
  \cE_k(N N_1) \otimes \cE_l(N_1) \otimes \rho^\times_N
  \big)^{\SL{2}(\ZZ)},\,
  0
  \Big)
\tx{.}
\end{gather}
Since Eisenstein series of weight~$k + l > 2$ converge absolutely, we can obtain every~$T$-fixed vector as a constant term of an Eisenstein series. This implies that
\begin{gather}
\label{eq:thm:eisenstein-series-in-eisenstein-product:constant-term-eisenstein-series}
  c\big( E_{k+l}(\rho^\times_N), 0 \big)
=
  \rho^{\times\,T\parity(k+l)}_N
\tx{.}
\end{gather}
In the case of weight~$2$, we can employ the same argument as in the proof of Lemma~\ref{la:eisenstein-lower-bound} to find that 
\begin{gather}
\label{eq:thm:eisenstein-series-in-eisenstein-product:constant-term-eisenstein-series-wt2}
  c\big( E_2(\rho^\times_N), 0 \big)
=
  \big( \rho^\times_N \ominus \bbone \big)^{T+}
\tx{.}
\end{gather}

Since~$N$ and~$N_1$ are co-prime by assumption, we can decompose $\rho^\times_{N N_1}$ as the tensor product~$\rho^\times_N \otimes \rho^\times_{N_1}$. Decomposing further by the action of the center of~$\SL{2}(\ZZ)$, we obtain the embedding
\begin{gather}
\label{eq:thm:eisenstein-series-in-eisenstein-product:tensor-decomposition}
  \rho^{\times\,\parity(k+l)}_N \otimes \rho^{\times\,\parity(l)}_{N_1}
\lhra
  \rho^{\times\,\parity(k)}_{N N_1}
\tx{.}
\end{gather}
In order to accommodate the case of~$k = 2$ and even~$l$, we refine~\eqref{eq:thm:eisenstein-series-in-eisenstein-product:tensor-decomposition} as
\begin{gather}
\label{eq:thm:eisenstein-series-in-eisenstein-product:tensor-decomposition-k2}
  \rho^{\times\,\parity(k+l)}_N \otimes \big( \rho^{\times\,\parity(l)}_{N_1} \ominus \bbone \big)
\lhra
  \rho^{\times\,\parity(k)}_{N N_1} \ominus \bbone
\tx{.}
\end{gather}

Any irreducible constituent of the left hand side of~\eqref{eq:thm:eisenstein-series-in-eisenstein-product:tensor-decomposition} and~\eqref{eq:thm:eisenstein-series-in-eisenstein-product:tensor-decomposition-k2} is a tensor product of irreducible constituents of the tensor factors, since~$N$ and~$N_1$ are co-prime. Vice versa, we infer from~\eqref{eq:thm:eisenstein-series-in-eisenstein-product:constant-term-eisenstein-series} and~\eqref{eq:thm:eisenstein-series-in-eisenstein-product:constant-term-eisenstein-series-wt2} that the tensor product of irreducible constituents of the left hand side of~\eqref{eq:thm:eisenstein-series-in-eisenstein-product:tensor-decomposition} and~\eqref{eq:thm:eisenstein-series-in-eisenstein-product:tensor-decomposition-k2} embeds into $\cE_k(N N_1)$ by Lemma~\ref{la:eisenstein-lower-bound}.

Our assumption on~$N_1$ guarantees by Lemma~\ref{la:eisenstein-lower-bound} that there is an irreducible $\SL{2}(\ZZ)$-representation $\sigma$ that embeds into $\cE_l(N_1)$. If~$k = 2$ and $l$ is even, then~$N_1$ is constrained in such a way that we can and will assume that $\sigma$ is not the trivial representation. In particular, $\rho' \otimes \sigma$ embeds into the left hand side of~\eqref{eq:thm:eisenstein-series-in-eisenstein-product:tensor-decomposition-k2} for any irreducible constituent~$\rho'$ of~$\rho^{\times\,\parity(k+l)}_N$.

Fix a $T$-fixed vector~$w_1 \in V(\sigma)$ and complete it to an orthogonal basis~$w_j$, $1 \le j \le \dim(\sigma)$ of~$V(\sigma)$. In the case of $l \ge 2$, the Eisenstein series $\td{E}_{l,\sigma^\vee,w_1^\vee} := E_{l,\sigma^\vee,w_1^\vee}$ has constant term~$w_1^\vee$. If $l = 1$, we can choose~$w_1$ in such a way that~$w_1^\vee$ is the constant term of an Eisenstein series $\td{E}_{l,\sigma^\vee,w_1^\vee}$ (in general, $\td{E}_{l,\sigma^\vee,w_1^\vee} \ne E_{l,\sigma^\vee,w_1^\vee}$ for~$l = 1$). This provides an embedding of~$\sigma$ into $\cE_l(\sigma^\vee) \subseteq \cE_l(N_1)$ via $\iota_\sigma :\, w \mto w \circ \td{E}_{l,\sigma^\vee,w_1^\vee}$. In addition, we obtain the embedding
\begin{gather}
\label{eq:thm:eisenstein-series-in-eisenstein-product:identity-inclusion}
  \bbone
\;\lhra\;
  \sigma^\vee
  \otimes
  \cE_l(N_1)
\tx{,}\quad
  1
\lmto
  \sum_{j = 1}^{\dim(\sigma)} w_j^\vee \otimes \big( w_j \circ \td{E}_{l,\sigma^\vee,w_1^\vee} \big)
\tx{.}
\end{gather}

Fix an irreducible, arbitrary constituent
\begin{gather*}
  \rho'
\;\lhra\;
  \rho^{\times\,\parity(k+l)}_N
\quad\tx{and}\quad
  v_1
\;\in\;
  V(\rho') \cap c(\rmE_{k+l}(\rho^\times_N);\,0)
\tx{.}
\end{gather*}
Observe that~$v_1 \otimes w_1$ is a $T$-fixed vector in~$V(\rho' \otimes \sigma) \subseteq V(\rho^\times_{N N_1})$. Since $k > 2$ or $\sigma \not\cong \bbone$, the Eisenstein series $E_{k, \rho' \otimes \sigma, v_1 \otimes w_1}$ exists. It allows us to define the embedding
\begin{gather*}
  \iota_{\rho' \otimes \sigma} :\,
  \rho^{\prime\,\vee} \otimes \sigma^\vee
\;\lhra\;
  \cE_k( \rho' \otimes \sigma )
\subseteq
  \cE_k(N N_1)
\tx{,}\quad
  v^\vee \otimes w^\vee
\;\lmto\;
  (v^\vee \otimes w^\vee) \circ E_{k, \rho' \otimes \sigma, v_1 \otimes w_1}
\tx{.}
\end{gather*}

Combining all the above maps we obtain the following embedding of $\SL{2}(\ZZ)$-representations:
\begin{multline}
\label{eq:thm:eisenstein-series-in-eisenstein-product:inclusion-of-invariants}
  \big(
  \rho^{\prime\,\vee}
  \otimes
  \rho^\times_N
  \big)^{\SL{2}(\ZZ)}
\;\lhra\;
  \Big(
  \rho^{\prime\,\vee}
  \otimes
  \big(
  \sigma^\vee
  \otimes
  \cE_l(N_1)
  \big)
  \otimes
  \rho^\times_N
  \Big)^{\SL{2}(\ZZ)}
\\
\;\cong\;
  \Big(
  \big(
  \rho^{\prime\,\vee}
  \otimes
  \sigma^\vee
  \big)
  \otimes
  \cE_l(N_1)
  \otimes
  \rho^\times_N
  \Big)^{\SL{2}(\ZZ)}
\;\lhra\;
  \Big(
  \cE_k(N N_1)
  \otimes
  \cE_l(N_1)
  \otimes
  \rho^\times_N
  \Big)^{\SL{2}(\ZZ)}
\tx{.}
\end{multline}

Complete~$v_1$ to an orthonormal basis~$v_i$, $1 \le i \le \dim(\rho')$ of~$V(\rho')$. Evaluating the composition of~\ref{eq:thm:eisenstein-series-in-eisenstein-product:inclusion-of-invariants}, we obtain
\begin{multline*}
  \sum_{i = 1}^{\dim(\rho')} v_i^\vee \otimes v_i
\;\lmto\;
  \sum_{i = 1}^{\dim(\rho')}
  \sum_{j = 1}^{\dim(\sigma)}
  v_i^\vee
  \otimes
  w_j^\vee \otimes \big( w_j \circ \td{E}_{l,\sigma^\vee,w_1^\vee} \big)
  \otimes
  v_i
\\
\;\lmto\;
  \sum_{i = 1}^{\dim(\rho')}
  \sum_{j = 1}^{\dim(\sigma)}
  \big( (v_i^\vee \otimes w_j^\vee) \circ E_{k, \rho' \otimes \sigma, v_1 \otimes w_1} \big)
  \otimes
  \big( w_j \circ \td{E}_{l,\sigma^\vee,w_1^\vee} \big)
  \otimes
  v_i
\tx{.}
\end{multline*}

Recall that $\iota_\sigma :\, w \mto w \circ \td{E}_{l,\sigma^\vee,w_1^\vee}$. In order to determine the constant term of the image, observe that the constant term of~$\iota_\sigma(w_j)$ equals~$1$, if $j = 1$, and $0$, otherwise, since the~$w_j$ are mutually orthogonal. Similarly, the constant term of~$\iota_{\rho' \otimes \sigma}(v_i^\vee w_j^\vee)$ equals~$1$ if $i = j = 1$, and $0$, otherwise. As a result, we directly see that the constant term of the right hand side equals~$v_1$. Since~$v_1$ and the embedding of~$\rho'$ were arbitrary, this confirms~\eqref{eq:thm:eisenstein-series-in-eisenstein-product:constant-term-refined} and finishes the proof.
\end{proof}

\renewbibmacro{in:}{}
\renewcommand{\bibfont}{\normalfont\small\raggedright}
\renewcommand{\baselinestretch}{.8}

\Needspace*{4em}
\begin{multicols}{2}
\printbibliography[heading=none]%
\end{multicols}

\Needspace*{3em}
\noindent
\rule{\textwidth}{0.15em}

{\noindent\small
Chalmers tekniska högskola och G\"oteborgs Universitet,
Institutionen för Matematiska vetenskaper,
SE-412 96 Göteborg, Sweden\\
E-mail: \url{martin@raum-brothers.eu}\\
Homepage: \url{http://raum-brothers.eu/martin}
}\\[1.5ex]
{\noindent\small
Chalmers tekniska högskola och G\"oteborgs Universitet,
Institutionen för Matematiska vetenskaper,
SE-412 96 Göteborg, Sweden\\
E-mail: \url{jiacheng@chalmers.se}
}

\end{document}

